\documentclass[12pt,reqno]{amsart}

\usepackage{amsthm, amsmath, amsfonts, amssymb, graphicx, tikz-cd, makecell, calrsfs, hyperref, tabularx}
\usepackage[shortlabels]{enumitem}
\usepackage{ltablex, comment}
\usepackage[capitalize,noabbrev]{cleveref} 

\newtheorem{theorem}{Theorem}[section]
\newtheorem{lemma}[theorem]{Lemma}

\newtheorem{corollary}[theorem]{Corollary}

\theoremstyle{definition}
\newtheorem{definition}[theorem]{Definition}
\newtheorem{example}[theorem]{Example}

\newtheorem{remark}[theorem]{Remark}

\setlength{\topmargin}{-0.5in}
\setlength{\textheight}{9in}
\setlength{\oddsidemargin}{0in}
\setlength{\evensidemargin}{0in}
\setlength{\textwidth}{6.5in}

\newcounter{mycount}

\newcommand{\myref}[1]{\hyperref[#1]{#1}}

\def\Esa{{\sf{ES}}}

\def\Pries{{\sf{PS}}}

\def\GPries{{\sf{GPS}}}
\def\PGPries{{\sf{PGPS}}}
\def\PGPriesP{{\sf{PGPS}_P}}

\def\PGEsa{{\sf{PGES}}}
\def\PGEsaP{{\sf{PGES}_P}}
\def\PEsa{{\sf{PES}}}
\def\PEsaP{{\sf{PES}_P}}

\def\DLat{{\sf{DL}}}
\def\HA{{\sf{HA}}}

\def\DMSLat{{\sf{DMS}}}
\def\DMSLatP{{\sf{DMS}_P}}

\def\AlgFrm{{\sf{AlgFrm}}}

\def\CohFrm{{\sf{CohFrm}}}
\def\BrwMS{{\sf{BrwMS}}}
\def\BrwMSP{{\sf{BrwMS}_P}}
\def\BrwA{{\sf{BrwA}}}
\def\BrwAP{{\sf{BrwA}_P}}
\def\BrwFrm{{\sf{BrwFrm}}}
\def\BrwFrmJ{{\sf{BrwFrm}_J}}
\def\BrwArFrm{{\sf{BrwArFrm}}}
\def\BrwArFrmJ{{\sf{BrwArFrm}_J}}
\def\HeytFrm{{\sf{HeytFrm}}}

\def\Spec{{\sf{Spec}}}

\def\up{{\uparrow}}
\def\down{{\downarrow}}

\def\A{\mathcal{A}}
\def\I{\mathcal{I}}
\def\X{\mathcal{X}}
\def\V{\mathcal{V}^a}
\def\K{\mathcal{K}}
\def\Y{\mathcal{Y}}
\def\F{\mathcal{F}}
\def\Opt{{\sf{Opt}}}

\def\pf{{\sf{pf}}}
\def\pt{{\sf{pt}}}
\def\Idl{{\mathcal{J}}}

\def\OpUp{{\sf{OpUp}}}
\def\ClopUp{{\sf{ClopUp}}}

\newcommand{\cl}{{\sf{cl}}}
\newcommand{\Int}{{\sf{int}}}

\newcommand\nr[1]{\mathord{\not\mathrel{#1}}}

\begin{document}

\title{Heyting frames and Esakia duality}

\author{G.~Bezhanishvili}
\address{New Mexico State University}
\email{guram@nmsu.edu}

\author{L.~Carai}
\address{Universitat de Barcelona}
\email{luca.carai.uni@gmail.com}

\author{P.~J.~Morandi}
\address{New Mexico State University}
\email{pmorandi@nmsu.edu}

\subjclass[2020]{06D20; 06D22; 18F70; 06E15}
\keywords{Heyting algebra; Esakia duality; coherent frame; algebraic frame; Brouwerian algebra; Brouwerian semilattice}

\begin{abstract}
We introduce the category of Heyting frames and show that it is equivalent to the category of Heyting algebras and dually equivalent to the category of Esakia spaces. This provides a frame-theoretic perspective on Esakia duality for Heyting algebras. We also generalize these results to the setting of Brouwerian algebras and Brouwerian semilattices by introducing the corresponding categories of Brouwerian frames and extending the above equivalences and dual equivalences. This provides a frame-theoretic perspective on generalized Esakia duality for Brouwerian algebras and Brouwerian semilattices.
\end{abstract}

\maketitle

\section{Introduction} \label{sec: intro}

In the early 1970s, two important duality theorems were established, by Priestley \cite{Pri70,Pri72} for bounded distributive lattices and by Esakia \cite{Esa74} for Heyting algebras. In both cases, the dual structures were special compact ordered spaces, which became known as Priestley spaces and Esakia spaces, respectively. As was shown by Cornish \cite{Cor75}, Priestley duality is closely related to Stone duality for bounded distributive lattices \cite{Sto37c}. Since Stone duals are spectral spaces, this opens the door for a frame-theoretic approach to Priestley duality. Indeed, the category $\DLat$ of bounded distributive lattices is equivalent to the category $\CohFrm$ of coherent frames, which are exactly the frames of open subsets of spectral spaces \cite[p.~65]{Joh82}. By \cite{Cor75}, the category $\Spec$ of spectral spaces is isomorphic to the category $\Pries$ of Priestley spaces. We thus obtain the following diagram connecting distributive lattices, coherent frames, and Priestley spaces. The arrow between $\DLat$ and $\CohFrm$ is an equivalence, the other two are dual equivalences, and the diagram commutes up to natural isomorphism (see \cref{sec: coherent frames} for details).   
\[
\begin{tikzcd}[column sep = 5pc]
\DLat \arrow[rr, leftrightarrow] \arrow[dr, leftrightarrow] && \CohFrm \arrow[dl, leftrightarrow] \\
& \Pries 
\end{tikzcd}
\]

Since Esakia duality is a restricted version of Priestley duality, it is natural to provide a frame-theoretic approach to it that is similar to the above approach to Priestley duality. For this purpose, we introduce the notion of a Heyting frame. To justify the definition, we recall that a frame $L$ is coherent if the set $K(L)$ of compact elements of $L$ is a bounded sublattice of $L$. Since every frame is a Heyting algebra, we define $L$ to be a Heyting frame if $K(L)$ is a Heyting subalgebra of $L$. Clearly a Heyting frame is a coherent frame. We show that the above correspondence between bounded distributive lattices, coherent frames, and Priestley spaces restricts to the same correspondence between Heyting algebras, Heyting frames, and Esakia spaces. 

There are various natural morphisms $\alpha : L\to M$ to consider between two Heyting frames. If we consider coherent frame homomorphisms (those frame homomorphisms that send compact elements to compact elements), the resulting category is a full subcategory of $\CohFrm$. But such morphisms don't take into account Heyting implication. Thus, we also consider those coherent frame homomorphisms that preserve implication on $K(L)$ as well as those that preserve implication on the entire $L$. In addition, we consider complete lattice homomorphisms as well as complete Heyting homomorphisms. This results in various categories of Heyting frames. For each we describe the corresponding categories of Esakia spaces and Heyting algebras. This yields the desired frame-theoretic approach to Esakia duality, with various natural morphisms at play.

We also provide a frame-theoretic approach to generalized Esakia duality for Brouwerian algebras \cite{BMR17} and Brouwerian semilattices \cite{BJ13}. We recall that Brouwerian algebras are Heyting algebras except possibly without bottom and Brouwerian semilattices are Brouwerian algebras except possibly without join. The dual spaces of Brouwerian algebras are pointed Esakia spaces, and those of Brouwerian semilattices are pointed generalized Esakia spaces. We extend the notion of Heyting frames to that of Brouwerian frames and Brouwerian arithmetic frames. This requires to work with algebraic and arithmetic frames instead of coherent frames. For compact elements of an algebraic frame $L$ to form a Brouwerian semilattice, we need to work with the dual order on $K(L)$. We thus define an algebraic frame $L$ to be Brouwerian provided the dual $K(L)^d$ is a Brouwerian semilattice. Since implication on $K(L)^d$ becomes co-implication on $K(L)$, it no longer makes sense to talk about $K(L)^d$ being a Brouwerian sub-semilattice of $L$ since co-implication may not exist on $L$. 

We show that Brouwerian frames play the same role in generalized Esakia duality for Brouwerian semilattices as Heyting frames play in Esakia duality, and that a similar role is played by Brouwerian arithmetic frames for Brouwerian algebras and pointed Esakia spaces. We conclude the article by discussing how the results about Heyting frames are related to the corresponding results about Brouwerian frames.

\section{Preliminaries} \label{sec: coherent frames}

We start by briefly describing Priestley duality \cite{Pri70,Pri72}. Let $X$ be a poset. For $S\subseteq X$, we let
\[
\up S = \{ x \in X : s \le x \mbox{ for some } s \in S \} \mbox{ and } \down S = \{ x \in X : x \le s \mbox{ for some } s \in S \}.
\]
If $S=\{s\}$ is a singleton, then we simply write $\up s$ and $\down s$. We call $S$ an {\em upset} if $\up S=S$ and a {\em downset} if $\down S=S$. A {\em Priestley space} is a poset $X$ equipped with a compact topology such that the {\em Priestley separation axiom} holds: If $x\not\le y$, then there is a clopen upset $U$ such that $x\in U$ and $y\notin U$. A \emph{Priestley morphism} is a continuous order-preserving map. Let $\Pries$ be the category of Priestley spaces and Priestley morphisms.  Let also $\DLat$ be the category of bounded distributive lattices and bounded lattice homomorphisms. 

\begin{theorem}[Priestley duality] \label{thm: Priestley}
$\DLat$ is dually equivalent to $\Pries$.
\end{theorem}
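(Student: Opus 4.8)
The plan is to construct the contravariant functors explicitly and verify they are quasi-inverse to each other. First, I would define the functor $(-)_* \colon \DLat \to \Pries$ on objects by sending a bounded distributive lattice $D$ to its set $X_D$ of prime filters, topologized by taking as a subbasis the sets $\{P : a \in P\}$ together with their complements $\{P : a \notin P\}$ for $a \in D$, and ordered by inclusion. I would check that $X_D$ is a Priestley space: compactness follows from the prime filter theorem (applied via an Alexander-subbasis argument, or equivalently by identifying $X_D$ with a closed subspace of $2^D$), and the Priestley separation axiom holds because if $P \nsubseteq Q$ then picking $a \in P \setminus Q$ gives the clopen upset $\{R : a \in R\}$ separating them. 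On morphisms, a bounded lattice homomorphism $h \colon D \to E$ is sent to $h^{-1} \colon X_E \to X_D$, which is continuous (preimages of subbasic sets are subbasic, using that $h$ preserves the lattice operations and bounds so that $h^{-1}$ of a prime filter is a prime filter) and order-preserving.

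Next I would define the functor $(-)^* \colon \Pries \to \DLat$ sending a Priestley space $X$ to the bounded distributive lattice $\ClopUp(X)$ of clopen upsets of $X$, with operations union, intersection, $\emptyset$, and $X$; and sending a Priestley morphism $f \colon X \to Y$ to $f^{-1} \colon \ClopUp(Y) \to \ClopUp(X)$, which is a well-defined bounded lattice homomorphism since $f$ is continuous and order-preserving. The bulk of the argument is then to exhibit the two natural isomorphisms. For a lattice $D$, the unit $\eta_D \colon D \to \ClopUp(X_D)$ sends $a$ to $\widehat{a} := \{P : a \in P\}$; injectivity and surjectivity onto the clopen upsets, and the fact that $\eta_D$ is a lattice homomorphism, all rest on the prime filter separation lemma (if $a \nleq b$ there is a prime filter containing $a$ but not $b$) together with compactness to show every clopen upset is a finite union of sets $\widehat{a}$. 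For a Priestley space $X$, the counit $\varepsilon_X \colon X \to X_{\ClopUp(X)}$ sends $x$ to the prime filter $\{U \in \ClopUp(X) : x \in U\}$; showing this is a well-defined bijection that is a homeomorphism and an order-isomorphism uses the Priestley separation axiom (to see $\varepsilon_X$ is injective and order-reflecting), compactness of $X$ (to see it is surjective), and again compactness to match the topologies. Naturality of both $\eta$ and $\varepsilon$ is a routine diagram chase.

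The main obstacle is the surjectivity of the counit $\varepsilon_X$, i.e.\ showing that every prime filter $\mathcal{F}$ of $\ClopUp(X)$ is of the form $\{U : x \in U\}$ for some (necessarily unique) $x \in X$. This is where the interplay between the topology and the order is essential: one considers the family $\{U : U \in \mathcal{F}\} \cup \{X \setminus V : V \in \ClopUp(X),\ V \notin \mathcal{F}\}$ of closed sets and argues, using the finite intersection property (which follows from $\mathcal{F}$ being a prime filter) together with compactness of $X$, that it has nonempty intersection; any point $x$ in that intersection satisfies $\varepsilon_X(x) = \mathcal{F}$, where one must invoke the Priestley separation axiom to rule out the intersection being "too large." The remaining verifications — that $X_D$ really is compact, and that $\eta_D$ lands in and surjects onto $\ClopUp(X_D)$ — are comparatively standard applications of the same two tools (the prime filter theorem and compactness), so I would state them and leave the routine details to the reader.
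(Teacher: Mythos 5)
Your proposal is correct and follows the standard construction of Priestley duality, which is exactly what the paper relies on: it states the theorem as a known result (citing Priestley) and only records the two contravariant functors, namely $\pf$ sending $A$ to its prime filter space with the topology you describe (your subbasis generates the paper's basis $\{\varphi(a)\setminus\varphi(b)\}$) and $\ClopUp$ sending a Priestley space to its clopen upsets. The key steps you isolate --- the prime filter theorem for the unit, and compactness plus the Priestley separation axiom for the bijectivity of the counit --- are the right ones, so no further comparison is needed.
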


The contravariant functors establishing Priestley duality are constructed as follows. For a Priestley space $X$, let $\ClopUp(X)$ be the bounded distributive lattice of clopen upsets of $X$. The contravariant functor $\ClopUp:\Pries\to\DLat$ sends $X \in \Pries$ to $\ClopUp(X)$ and a $\Pries$-morphism $f :X \to Y$ to the bounded lattice homomorphism $f^{-1} : \ClopUp(Y) \to \ClopUp(X)$. For $A \in \DLat$ let $X_A$ be the set of prime filters of $A$ ordered by inclusion and equipped with the topology whose basis is $\{ \varphi(a)\setminus\varphi(b) : a,b \in A \}$, where $\varphi : A \to \wp(X_A)$ is the {\em Stone map} $\varphi(a) = \{ x \in X_A : a \in x\}$. Then $X_A$ is a Priestley space and the contravariant functor $\pf:\DLat \to \Pries$ sends $A\in\DLat$ to $X_A$ and a $\DLat$-morphism $h : A \to B$ to $h^{-1} : X_B \to X_A$.

Esakia duality \cite{Esa74} is a restricted version of Priestley duality. We recall that an {\em Esakia space} is a Priestley space $X$ in which $\down U$ is clopen for each clopen $U$. An \emph{Esakia morphism} is a continuous map $f:X\to Y$ satisfying $\down f^{-1}(y)=f^{-1}(\down y)$ for each $y\in Y$.\footnote{Maps $f:X\to Y$ between posets satisfying the latter condition are often called {\em p-morphisms} or {\em bounded morphisms}.} Let $\Esa$ be the category of Esakia spaces and Esakia morphisms.  We also recall that a \emph{Heyting algebra} is a bounded distributive lattice $A$ such that $\wedge$ has a residual $\to$ satisfying $a \wedge c \le b$ iff $c \le a \to b$. The operation $\to$ is often referred to as {\em implication}. Let $\HA$ be the category of Heyting algebras and Heyting algebra homomorphisms.
 
\begin{theorem}[Esakia duality] \label{thm: Esakia}
$\HA$ is dually equivalent to $\Esa$.
\end{theorem}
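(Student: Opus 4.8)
The plan is to deduce \cref{thm: Esakia} from Priestley duality (\cref{thm: Priestley}) by restricting the functors $\ClopUp$ and $\pf$ to the (non-full) subcategories $\Esa\subseteq\Pries$ and $\HA\subseteq\DLat$. Concretely, I would verify four points. (a) If $X$ is an Esakia space, then $\ClopUp(X)$ is a Heyting algebra with $U\to V=X\setminus\down(U\setminus V)$: since $U\setminus V$ is clopen and $X$ is Esakia, $\down(U\setminus V)$ is clopen, so $U\to V$ is a clopen upset, and a direct check gives $W\cap U\subseteq V$ iff $W\subseteq U\to V$. (b) If $A$ is a Heyting algebra, then $X_A$ is an Esakia space. (c) For $A,B\in\HA$, a bounded lattice homomorphism $h\colon A\to B$ preserves $\to$ iff $h^{-1}\colon X_B\to X_A$ is an Esakia morphism. (d) The Stone map $\varphi\colon A\to\ClopUp(X_A)$ and the Priestley homeomorphism $X\to X_{\ClopUp(X)}$ are isomorphisms of Heyting algebras and of Esakia spaces, respectively. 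Granting (a)--(d), the restricted functors are well defined on objects and morphisms, remain faithful, are full by (c) together with fullness in \cref{thm: Priestley}, and are essentially surjective by (a), (b), (d); with naturality inherited from \cref{thm: Priestley}, they then constitute a dual equivalence.

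The technical core behind (b), (c), (d) is the \emph{Esakia lemma}: for $A\in\HA$ and a prime filter $x$, one has $a\to b\in x$ iff $b\in y$ for every prime filter $y\supseteq x$ with $a\in y$; the nontrivial implication follows by applying the prime filter separation lemma to the filter generated by $x\cup\{a\}$ and the principal ideal $\down b$, using that $u\wedge a\le b$ forces $u\le a\to b$. This lemma yields the identity $\varphi(a\to b)=X_A\setminus\down(\varphi(a)\setminus\varphi(b))$. For (b): since basic clopens of $X_A$ have the form $\varphi(a)\setminus\varphi(b)$, the identity shows each has clopen downset, and as every clopen of $X_A$ is a finite union of basic clopens and $\down$ commutes with unions, $\down U$ is clopen for all clopen $U$, i.e. $X_A\in\Esa$. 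For (d): comparing this identity with the implication formula from (a) shows $\varphi(a\to b)=\varphi(a)\to\varphi(b)$ in $\ClopUp(X_A)$, so the bounded lattice isomorphism $\varphi$ is a Heyting isomorphism; and the Priestley homeomorphism $X\to X_{\ClopUp(X)}$ is an order isomorphism, hence automatically an Esakia isomorphism.

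I expect (c) to be the main obstacle, since it is where implication is traded for the order-theoretic ``back'' condition of a bounded morphism, namely $f(y)\le x$ implies $f(y')=x$ for some $y'\ge y$ (equivalently $\down f^{-1}(x)=f^{-1}(\down x)$ for all $x$). Writing $f=h^{-1}$: if $h$ preserves $\to$ and $f(y)\subseteq x$, I would show the filter of $B$ generated by $y\cup h(x)$ is disjoint from the ideal of $B$ generated by $h(A\setminus x)$ --- otherwise $u\wedge h(a_0)\le h(b)$ for some $u\in y$, $a_0\in x$, $b\notin x$, whence $u\le h(a_0)\to h(b)=h(a_0\to b)$, so $a_0\to b\in f(y)\subseteq x$ and therefore $b\in x$, a contradiction --- and a prime filter $y'\supseteq y$ separating the two then satisfies $f(y')=x$, giving the back condition. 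Conversely, if $f$ is a bounded morphism, then from $\varphi_B(h(c))=f^{-1}(\varphi_A(c))$, the Esakia-lemma identity, and the fact that the back condition yields $f^{-1}(\down S)=\down f^{-1}(S)$, one computes $\varphi_B(h(a\to b))=\varphi_B(h(a)\to h(b))$ and concludes $h(a\to b)=h(a)\to h(b)$ by injectivity of $\varphi_B$. Assembling (a)--(d) then completes the proof.
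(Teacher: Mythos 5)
Your proposal is correct and follows the same route the paper takes: \cref{thm: Esakia} is presented as a known result from \cite{Esa74}, obtained by restricting the Priestley duality functors $\pf$ and $\ClopUp$ to $\HA$ and $\Esa$, and your items (a) and (c) are precisely the content of \cref{lem: Esakia duality lemma}, which the paper cites from \cite{DG03} rather than reproving. The details you supply --- the prime-filter Esakia lemma, the resulting identity $\varphi(a\to b)=X_A\setminus\down\bigl(\varphi(a)\setminus\varphi(b)\bigr)$, and the filter--ideal separation argument establishing the back condition for $h^{-1}$ --- are the standard ones and are sound.
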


Note that $\HA$ is a non-full subcategory of $\DLat$, $\Esa$ is a non-full subcategory of $\Pries$, and Esakia duality is established by restricting the contravariant functors $\pf$ and $\ClopUp$ establishing Priestley duality. 

The following lemma is well known in Esakia duality and will be used subsequently (see, e.g., \cite[Thms.~4.2 and 4.3]{DG03}).

\begin{lemma} \label{lem: Esakia duality lemma}
\hfill
\begin{enumerate}[label=$(\arabic*)$, ref=\thelemma(\arabic*)]
\item \label[lemma]{lem: Esakia duality lemma(1)} 
A Priestley space $X$ is an Esakia space iff $\ClopUp(X)$ is a Heyting algebra, in which case the implication of $U,V\in\ClopUp(X)$ is calculated by $U\to V = X \setminus \down (U\setminus V)$.  
\item \label[lemma]{lem: Esakia duality lemma(2)}
Let $X, Y$ be Esakia spaces and $f : X \to Y$ a $\Pries$-morphism. The following conditions are equivalent:
\begin{enumerate}[label = $(\alph*)$] 
\item $f$ is an $\Esa$-morphism. 
\item $f^{-1}(\down U) = \down f^{-1}(U)$ for each clopen subset $U$ of $Y$.
\item $f^{-1}$ is a Heyting homomorphism.
\end{enumerate}
\end{enumerate}
\end{lemma}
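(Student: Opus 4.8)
The plan is to prove the two parts of the lemma in order, exploiting the explicit description of implication in $\ClopUp(X)$ that part (1) provides.

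\smallskip

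\noindent\textbf{Part (1).} First I would show that if $X$ is an Esakia space, then $\ClopUp(X)$ is a Heyting algebra with the stated implication. Given $U,V\in\ClopUp(X)$, set $W = X\setminus\down(U\setminus V)$. Since $U\setminus V$ is clopen and $X$ is an Esakia space, $\down(U\setminus V)$ is clopen, so $W$ is clopen; and since $\down(U\setminus V)$ is a downset, $W$ is an upset, hence $W\in\ClopUp(X)$. Then I would verify the residuation property $U\cap Z\subseteq V \iff Z\subseteq W$ for all $Z\in\ClopUp(X)$: the forward direction uses that $U\cap Z\subseteq V$ forces $Z$ to be disjoint from $U\setminus V$, and since $Z$ is an upset it is then disjoint from $\down(U\setminus V)$; the reverse direction is immediate from $W\cap U\subseteq V$, which itself follows because any point of $W\cap U$ lying outside $V$ would be in $U\setminus V\subseteq\down(U\setminus V)$. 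This shows $\ClopUp(X)$ is a Heyting algebra and pins down the formula. For the converse, suppose $\ClopUp(X)$ is a Heyting algebra; I would show $\down U$ is clopen for each clopen $U$. The main point is that $\down U$ need not be an upset, so one cannot directly name it as an element of $\ClopUp(X)$; instead I would work with its complement $X\setminus\down U$, which \emph{is} a clopen-candidate upset, and identify it with a Heyting implication. Concretely, using the Priestley separation axiom one writes $U = \bigcup_i (V_i\setminus V_i')$ as a finite union of differences of clopen upsets (by compactness of $U$), reduces to $U$ a single such difference, and then checks $X\setminus\down(V\setminus V') = V\to V'$ computed in $\ClopUp(X)$, which is clopen by hypothesis; taking complements and finite unions gives that $\down U$ is clopen.

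\smallskip

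\noindent\textbf{Part (2).} Here I would prove the cycle of implications (a)$\Rightarrow$(b)$\Rightarrow$(c)$\Rightarrow$(a). For (a)$\Rightarrow$(b): assuming $f$ is a p-morphism on points, $\down f^{-1}(y)=f^{-1}(\down y)$ for each $y$, I would bootstrap to arbitrary clopen $U\subseteq Y$ by writing $f^{-1}(\down U) = f^{-1}\big(\bigcup_{y\in U}\down y\big) = \bigcup_{y\in U} f^{-1}(\down y) = \bigcup_{y\in U}\down f^{-1}(y) = \down f^{-1}(U)$, where the last equality uses $f^{-1}(U)=\bigcup_{y\in U}f^{-1}(y)$ and that $\down$ commutes with unions. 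For (b)$\Rightarrow$(c): $f^{-1}:\ClopUp(Y)\to\ClopUp(X)$ is already a bounded lattice homomorphism by Priestley duality, so it suffices to check it preserves implication. Using the formula from part (1), for $U,V\in\ClopUp(Y)$ we have $f^{-1}(U\to V) = f^{-1}\big(Y\setminus\down(U\setminus V)\big) = X\setminus f^{-1}(\down(U\setminus V)) = X\setminus\down f^{-1}(U\setminus V) = X\setminus\down(f^{-1}U\setminus f^{-1}V) = f^{-1}U\to f^{-1}V$, where the third equality is hypothesis (b) applied to the clopen set $U\setminus V$. For (c)$\Rightarrow$(a): this is the direction I expect to be the main obstacle, since it goes from an algebraic condition back to a condition on points. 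I would argue contrapositively: if $f$ is not a p-morphism, then for some $y\in Y$ the inclusion $\down f^{-1}(y)\subseteq f^{-1}(\down y)$ is proper (the other inclusion always holds by order-preservation), so there is $x\in X$ with $f(x)\le y$ but no $x'\ge x$ with $f(x')=y$ — equivalently $y\notin f(\up x)$. Since $f(\up x)$ is a closed upset (image of a closed upset under a Priestley morphism between Priestley spaces) and $y\notin f(\up x)$, the Priestley separation axiom applied to $Y$ yields a clopen upset $V$ separating $y$ from $f(\up x)$; more care is needed because separation gives one-directional separation, so I would instead invoke the standard fact that in a Priestley space a point outside a closed upset can be separated from it by a clopen upset, giving $V\in\ClopUp(Y)$ with $f(\up x)\subseteq V$ and $y\notin V$. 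Then $\up x\subseteq f^{-1}(V)$, so $x\in f^{-1}(V)$, i.e. $x\notin \neg f^{-1}(V) = f^{-1}(V)\to\emptyset$; on the other hand $f(x)\le y\notin V$ means $f(x)\notin V$ and in fact one can choose $V$ so that $f(x)\in\neg V$ (separating the closed upset $\up x$, which does not contain any preimage of points below... ) — the cleanest route is to take $V$ to be a clopen upset with $f(\up x)\subseteq V$ and $f(x)\notin \down V$, possible since $f(x)$ lies below $y\notin V\supseteq\up\text{-saturation}$, making $f(x)\in \neg V$, hence $x\in f^{-1}(\neg V)=f^{-1}(V\to\emptyset)$, contradicting $f^{-1}(V\to\emptyset)=f^{-1}(V)\to\emptyset = \neg f^{-1}(V)$ which we just saw excludes $x$. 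I would streamline this last computation by isolating the lemma that $f^{-1}$ preserving $\to$ forces it to preserve $\neg$, and then testing against negations of suitable clopen upsets.

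\smallskip

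Throughout, the topological facts I rely on — that clopen upsets separate points from closed upsets in Priestley spaces, that continuous images of closed sets are closed by compactness, and that $\down$ of a clopen set is clopen in Esakia spaces — are standard; the only genuinely delicate bookkeeping is in (c)$\Rightarrow$(a), where one must manufacture exactly the right clopen upset of $Y$ to feed into the algebraic hypothesis.
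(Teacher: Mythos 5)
The paper offers no proof of this lemma---it is cited as well known, to Davey--Galati---so your attempt is measured only against a correct argument. Your part (1) forward direction and the implications (a)$\Rightarrow$(b)$\Rightarrow$(c) in part (2) are sound. In the converse of part (1), however, the asserted identity $X\setminus\down(V\setminus V')=V\to V'$ is exactly where the work lies, and you elide it: a priori $V\to V'$ is only the largest \emph{clopen} upset $Z$ with $V\cap Z\subseteq V'$, while $X\setminus\down(V\setminus V')$ is the largest upset with that property, so one inclusion is free but the other needs an argument. The standard one: if $x\notin\down(V\setminus V')$ then $\up x\cap V\cap(X\setminus V')=\varnothing$; writing $\up x=\bigcap\{Z\in\ClopUp(X):x\in Z\}$ and using compactness (this family of clopen upsets is closed under finite intersections) produces a single $Z\ni x$ with $Z\cap V\subseteq V'$, whence $x\in Z\subseteq V\to V'$. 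Without this step the converse is not established.

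The more serious problem is (c)$\Rightarrow$(a), which cannot work as written. You seek a clopen upset $V$ of $Y$ with $f(\up x)\subseteq V$ and $y\notin V$; but $f(x)\in f(\up x)$ and $f(x)\le y$, so \emph{every} upset containing $f(\up x)$ already contains $y$---no such $V$ exists. (Moreover $f(\up x)$ is closed but generally not an upset, contrary to your parenthetical.) Your fallback request, a clopen upset $V$ with $f(\up x)\subseteq V$ and $f(x)\notin\down V$, is outright contradictory since $f(x)\in f(\up x)\subseteq V\subseteq\down V$. The separation must instead happen in $X$: assuming $\up x\cap f^{-1}(y)=\varnothing$, write $\up x$ as an intersection of clopen upsets of $X$ and $\{y\}$ as an intersection of basic clopens $U\setminus V$ with $U,V\in\ClopUp(Y)$; compactness then yields $Z\in\ClopUp(X)$ with $x\in Z$ and $U,V$ with $y\in U\setminus V$ and $Z\cap f^{-1}(U)\subseteq f^{-1}(V)$, i.e.\ $x\in f^{-1}(U)\to f^{-1}(V)=f^{-1}(U\to V)$. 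Since $U\to V$ is an upset and $f(x)\le y$, this forces $y\in U\cap(U\to V)\subseteq V$, a contradiction. (Alternatively: prove (c)$\Rightarrow$(b) by running your (b)$\Rightarrow$(c) computation backwards, then get (b)$\Rightarrow$(a) from $f^{-1}(y)=\bigcap\{f^{-1}(W):W\ni y \text{ clopen}\}$ and the finite intersection property applied to $\up x\cap f^{-1}(W)$.)
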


We recall (see, e.g., \cite[p.~10]{PP12}) that a {\em frame} is a complete lattice $L$ satisfying the infinite distributive law $a\wedge\bigvee S= \bigvee \{a\wedge s : s \in S \}$ for each $S\subseteq L$. An element $a\in L$ is {\em compact} if $a \le \bigvee S$ implies $a\le\bigvee T$ for some finite $T\subseteq S$. Let $K(L)$ be the set of compact elements of $L$. Then $K(L)$ is a join-subsemilattice of $L$. We say that $L$ is {\em algebraic} if $K(L)$ is join-dense in $L$ (that is, each element of $L$ is a join of compact elements), and that $L$ is {\em coherent} if $L$ is algebraic and in addition $K(L)$ is a bounded sublattice of $L$.

A map $\alpha : L\to M$ between frames is a {\em frame homomorphism} if $\alpha$ preserves finite meets and arbitrary joins. Let $\AlgFrm$ be the category of algebraic frames and frame homomorphisms that preserve compact elements (that is, $a\in K(L)$ implies $\alpha(a)\in K(M)$). Let also $\CohFrm$ be the full subcategory of $\AlgFrm$ consisting of coherent frames. 

\begin{theorem} \cite[p.~65]{Joh82}
$\DLat$ is equivalent to $\CohFrm$.
\end{theorem}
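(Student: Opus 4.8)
The plan is to exhibit a pair of functors between $\DLat$ and $\CohFrm$ and show they are mutually quasi-inverse. In one direction, I would send a bounded distributive lattice $A$ to its frame of ideals $\Idl(A)$ (ordered by inclusion). One checks that $\Idl(A)$ is a frame, that its compact elements are exactly the principal ideals $\down a = \{b \in A : b \le a\}$, and that the assignment $a \mapsto \down a$ is a bounded-lattice isomorphism from $A$ onto $K(\Idl(A))$; in particular $K(\Idl(A))$ is a bounded sublattice and $\Idl(A)$ is join-generated by its compact elements, so $\Idl(A)$ is a coherent frame. A $\DLat$-morphism $h : A \to B$ induces $\Idl(h) : \Idl(A) \to \Idl(B)$ by $I \mapsto \down h(I)$ (the ideal generated by $h(I)$); one verifies this preserves finite meets and arbitrary joins and sends principal ideals to principal ideals, hence is an $\AlgFrm$-morphism into $\CohFrm$.

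In the other direction, I would send a coherent frame $L$ to the bounded distributive lattice $K(L)$, which is a bounded sublattice of $L$ by definition of coherence (distributivity of $K(L)$ is inherited from $L$). A $\CohFrm$-morphism $\alpha : L \to M$ restricts to a bounded-lattice homomorphism $K(\alpha) : K(L) \to K(M)$, since $\alpha$ preserves finite meets, the top and bottom (these being compact in a coherent frame), and carries compact elements to compact elements by hypothesis. Both assignments are clearly functorial.

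For the natural isomorphisms: $K(\Idl(A)) \cong A$ via the principal-ideal map described above, naturally in $A$. Conversely, given a coherent frame $L$, the map $\Idl(K(L)) \to L$ sending an ideal $I$ of $K(L)$ to $\bigvee I$ is a frame isomorphism: it is surjective because $L$ is algebraic (every element is a join of compact elements, and the compact elements below a given element form an ideal of $K(L)$), injective because distinct ideals of $K(L)$ have distinct joins (if $a \in I \setminus J$ with $a$ compact, then $a \le \bigvee I$ but $a \not\le \bigvee J$, using compactness of $a$ and that $J$ is a downset in $K(L)$), and it preserves finite meets and arbitrary joins using the infinite distributive law in $L$ together with compactness. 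Naturality in $L$ is a routine check against the definition of $\Idl(\alpha)$.

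The main obstacle is the verification that $\Idl(K(L)) \cong L$, and specifically that this map preserves meets: one must show $\bigvee(I \cap J) = \left(\bigvee I\right) \wedge \left(\bigvee J\right)$ for ideals $I, J$ of $K(L)$, which uses the infinite distributive law to expand the right-hand side as $\bigvee\{a \wedge b : a \in I,\ b \in J\}$ and then the fact that $a \wedge b \in K(L)$ (since $K(L)$ is closed under meets) and $a \wedge b \le a, b$ forces $a \wedge b \in I \cap J$. The compactness argument for injectivity is the other point requiring care, but both are standard once the right lemmas about compact elements in algebraic frames are in place.
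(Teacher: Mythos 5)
Your proposal is correct and matches the paper's approach exactly: the paper cites this result to Johnstone and then describes the very same functors $\Idl$ (with $h^*(I)=\down h[I]$) and $\K$ (restriction to compact elements), with the same natural isomorphisms $A\cong K(\Idl(A))$ via principal ideals and $\Idl(K(L))\cong L$ via $I\mapsto\bigvee I$. The verifications you flag (meet preservation via the infinite distributive law and injectivity via compactness) are the standard ones and are handled correctly.
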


The functors establishing this equivalence are constructed as follows. The functor $\Idl:\DLat\to\CohFrm$ 
sends $A\in\DLat$ to the frame $\Idl(A)$ of ideals of $A$, and a $\DLat$-morphism $h:A \to B$ to the $\CohFrm$-morphism $h^*:\Idl(A) \to \Idl(B)$ given by $h^*(I)=\down h[I]$. The functor $\K : \CohFrm\to\DLat$ sends $L\in\CohFrm$ to its bounded sublattice $K(L)$ and a $\CohFrm$-morphism $\alpha : L\to M$ to its restriction to $K(L)$.

Since frames are precisely complete Heyting algebras (see, e.g., \cite[p.~12]{Esa19}), 
the ideal frame of $A\in\DLat$ is a Heyting algebra. We will use the following description of implication on $\Idl(A)$. 
\begin{lemma} \label{lem: implication in ideals}
Let $A \in \DLat$. Then implication on $\Idl(A)$ is calculated by the following formula
\[
I \to J = \{ a \in A : a \wedge b \in J \ \forall b \in I\} = \{ a \in A : \down a \cap I \subseteq J \}.
\]
\end{lemma}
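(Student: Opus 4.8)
The plan is to verify the claimed formula for $I \to J$ directly from the universal property of the residual on the frame $\Idl(A)$, namely that $I \to J$ is the largest ideal $N$ with $I \cap N \subseteq J$. First I would let $N_0 = \{ a \in A : a \wedge b \in J \text{ for all } b \in I\}$ and observe that the second description $\{ a \in A : \down a \cap I \subseteq J\}$ is literally the same set, since $a \wedge b \in J$ for all $b \in I$ says exactly that every element of $\down a \cap I$ (which consists of elements below $a$ that lie in $I$, hence of the form $a \wedge b$ with $b \in I$ up to $\le$, using that $J$ is a downset) lies in $J$; so the two presentations are interchangeable and I would just pick whichever is more convenient at each step.

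Next I would check that $N_0$ is an ideal of $A$. It is a downset: if $a \in N_0$ and $c \le a$, then for $b \in I$ we have $c \wedge b \le a \wedge b \in J$, and $J$ is a downset, so $c \wedge b \in J$. It is closed under finite joins: if $a_1, a_2 \in N_0$ and $b \in I$, then $(a_1 \vee a_2) \wedge b = (a_1 \wedge b) \vee (a_2 \wedge b)$ by distributivity, and this is a join of two elements of $J$, hence in $J$ since $J$ is an ideal; and $0 \in N_0$ vacuously (or because $0 \wedge b = 0 \in J$). So $N_0 \in \Idl(A)$.

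Then I would verify the two inequalities that pin down $N_0$ as the residual. For $I \cap N_0 \subseteq J$: if $a \in I \cap N_0$, then taking $b = a \in I$ in the defining condition gives $a = a \wedge a \in J$. For maximality, suppose $N \in \Idl(A)$ satisfies $I \cap N \subseteq J$; I claim $N \subseteq N_0$. Given $a \in N$ and $b \in I$, the element $a \wedge b$ lies in $N$ (since $N$ is a downset) and in $I$ (since $I$ is a downset), hence in $I \cap N \subseteq J$; thus $a \in N_0$. Combining, $N_0$ is the greatest ideal meeting $I$ inside $J$, so $N_0 = I \to J$ by the definition of Heyting implication in the frame $\Idl(A)$.

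I don't anticipate a genuine obstacle here; the argument is a routine unwinding of definitions. The only points requiring a little care are the repeated use of the fact that $I$ and $J$ are downsets (to pass freely between $a \wedge b$ and arbitrary elements of $\down a \cap I$) and the use of the distributive law of $A$ to show $N_0$ is closed under joins — the latter is where distributivity of $A$, rather than merely being a lattice, is actually used. One should also remember that the residual exists at all because $\Idl(A)$ is a frame (complete Heyting algebra), which has already been recorded in the excerpt.
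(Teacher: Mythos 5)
Your proof is correct and follows essentially the same route as the paper: identify the set $E=\{a\in A: a\wedge b\in J\ \forall b\in I\}$, check it is an ideal, and verify it is the largest ideal $N$ with $I\cap N\subseteq J$, using $a=a\wedge a$ for containment and $a\wedge b\in I\cap N$ for maximality. The only difference is that you spell out the details the paper labels as straightforward (the equivalence of the two descriptions and the ideal check via distributivity), which is harmless.
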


\begin{proof}
Since the second equality is straightforward,  we only verify the first equality. Set $E = \{ a \in A : a \wedge b \in J \ \forall b \in I\}$. Since $A\in\DLat$, it is straightforward to see that $E\in\Idl(A)$. We show that $E=I\to J$. For this we must show that $E$ is the largest ideal of $A$ satisfying $I\cap E\subseteq J$. To see that $I\cap E\subseteq J$, let $a \in I \cap E$. Then $a\in I$ and $a\in E$. The latter implies that $a\wedge b \in J$ for all $b\in I$. In particular, since $a\in I$, we have $a=a\wedge a \in J$. Therefore, $I \cap E \subseteq J$. To see that $E$ is the largest such, let $N\in\Idl(A)$ with $I \cap N \subseteq J$. Let $a \in N$ and $b \in I$ be arbitrary. Then $a \wedge b \in I \cap N \subseteq J$, so $a \in E$. Thus, $N\subseteq E$, and hence $I \to J = E$. 
\end{proof}

Since $\DLat$ is equivalent to $\CohFrm$ and dually equivalent to $\Pries$, we obtain that $\CohFrm$ is dually equivalent to $\Pries$. The contravariant functors establishing this dual equivalence can be constructed as follows. Let $X\in\Pries$. Since the frame of ideals of $\ClopUp(X)$ is isomorphic to the frame $\OpUp(X)$ of open upset of $X$ (see, e.g., \cite[p.~54]{Pri84} or \cite[p.~385]{BBGK10}), the contravariant functor $\OpUp:\Pries\to\CohFrm$ sends $X\in\Pries$ to $\OpUp(X)\in\CohFrm$ and a $\Pries$-morphism $f:X\to Y$ to the $\CohFrm$-morphism $f^{-1}:\OpUp(Y)\to\OpUp(X)$. 

To describe the contravariant functor $\pt : \CohFrm \to \Pries$, we recall (see, e.g., \cite[p.~13]{PP12}) that a {\em point} of a frame $L$ is a completely prime filter $P$ (that is, $P$ is a filter such that $\bigvee S\in P$ implies $S\cap P\ne\varnothing$). Let $\pt(L)$ be the set of points of $L$. For each $a \in L$ let $\zeta(a) = \{ P \in \pt(L) : a \in P\}$. It is well known (see, e.g., \cite[p.~15]{PP12}) that $\tau = \{ \zeta(a) : a \in L\}$ is a topology on $\pt(L)$. Let $L\in\CohFrm$. Then $(\pt(L), \tau)$ is a spectral space (see, e.g., \cite[p.~65]{Joh82}), and $(\pt(L), \pi, \subseteq)$ is the corresponding Priestley space, where $\pi$ is the patch topology of $\tau$ (see, e.g., \cite{Cor75}). The contravariant functor $\pt:\CohFrm\to\Pries$ sends $L\in\CohFrm$ to the Priestley space $(\pt(L), \pi, \subseteq)$, and a $\CohFrm$-morphism $\alpha : L\to M$ to $\alpha^{-1}:\pt(M)\to\pt(L)$. 

For $L \in \CohFrm$ the Priestley spaces $(\pt(L), \pi, \subseteq)$ and $\pf(K(L))$ of prime filters of $K(L)$ are isomorphic in $\Pries$. This isomorphism is obtained by sending each point $P$ of $L$ to $P\cap K(L)$ and each prime filter $F$ of $K(L)$ to its upset $\up F$ in $L$.
We thus obtain the following well-known result, which provides a frame-theoretic perspective on Priestley duality: 

\begin{theorem} \label{thm: Pries and Coh}
There is an equivalence of categories between $\DLat$ and $\CohFrm$, a dual equivalence between $\CohFrm$ and $\Pries$, and the following diagram commutes up to natural isomorphism.
\[
\begin{tikzcd}[column sep = 5pc]
\DLat \arrow[rr, shift left = .5ex, "\Idl"] \arrow[dr, "{\sf pf}"] & & \CohFrm \arrow[ll, shift left = .5ex, "{\K}"] \arrow[dl, "{\sf pt}"'] \\
& \Pries \arrow[ul, shift left = 1.0ex, "\ClopUp"] \arrow[ur, shift right = 1.0ex, "\OpUp"'] &
\end{tikzcd}
\]
\end{theorem}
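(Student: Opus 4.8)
The plan is to assemble the statement from pieces already in hand: Johnstone's equivalence between $\DLat$ and $\CohFrm$ witnessed by $(\Idl,\K)$, Priestley duality between $\DLat$ and $\Pries$ witnessed by $(\pf,\ClopUp)$, and the two frame-theoretic identifications recalled just above the statement --- that for $X\in\Pries$ the ideal frame of $\ClopUp(X)$ is isomorphic in $\CohFrm$ to $\OpUp(X)$, and that for $L\in\CohFrm$ the Priestley space $(\pt(L),\pi,\subseteq)$ is isomorphic in $\Pries$ to $\pf(K(L))$ via $P\mapsto P\cap K(L)$. No genuinely new construction is required; the work is to see that these identifications are natural and to propagate them around the triangle.

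First I would record that $\OpUp$ and $\pt$ are well-defined functors landing in $\CohFrm$ and $\Pries$ respectively, with $f\mapsto f^{-1}$ and $\alpha\mapsto\alpha^{-1}$ on morphisms; this is exactly what the paragraphs preceding the statement set up, citing \cite{Joh82,Cor75,Pri84,BBGK10}. Next I would promote the two identifications to natural isomorphisms of functors,
\[
\Idl\circ\ClopUp\;\cong\;\OpUp\colon\Pries\to\CohFrm
\qquad\text{and}\qquad
\pf\circ\K\;\cong\;\pt\colon\CohFrm\to\Pries .
\]
For the first this amounts to checking that the cited isomorphism $\Idl(\ClopUp(X))\cong\OpUp(X)$ commutes with $f^{-1}$ for a $\Pries$-morphism $f$. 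For the second, under the bijection $P\mapsto P\cap K(L)$ one checks that $\alpha^{-1}$ on points matches $(\alpha|_{K(L)})^{-1}$ on prime filters, via the direct computation $(\alpha|_{K(M)})^{-1}[\,P\cap K(M)\,]=\{b\in K(L):\alpha(b)\in P\}=\alpha^{-1}[P]\cap K(L)$. Granting these, the two remaining triangles and the dual equivalence between $\CohFrm$ and $\Pries$ follow formally from $\K\circ\Idl\cong\mathrm{id}_{\DLat}$ and $\pf\circ\ClopUp\cong\mathrm{id}_{\Pries}$: one has $\K\circ\OpUp\cong\K\circ\Idl\circ\ClopUp\cong\ClopUp$ and $\pt\circ\Idl\cong\pf\circ\K\circ\Idl\cong\pf$, while $\OpUp\circ\pt\cong\Idl\circ\ClopUp\circ\pf\circ\K\cong\Idl\circ\K\cong\mathrm{id}_{\CohFrm}$ and $\pt\circ\OpUp\cong\pf\circ\K\circ\Idl\circ\ClopUp\cong\pf\circ\ClopUp\cong\mathrm{id}_{\Pries}$.

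The substantive content therefore sits entirely in the two cited isomorphisms, and the only real labour is bookkeeping: checking naturality --- that each isomorphism intertwines the two functors' actions on morphisms --- and keeping the four triangles straight. I expect the naturality of $\pt\cong\pf\circ\K$ to be the fiddliest point, since it requires chasing a point of $M$ through $\alpha^{-1}$, through intersection with the compact elements, and back along $F\mapsto\up F$, and verifying that the two routes through the square agree on the nose; the rest is routine.
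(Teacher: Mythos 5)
Your proposal is correct and follows exactly the route the paper intends: the paper gives no explicit proof, presenting the theorem as a summary of Johnstone's equivalence $(\Idl,\K)$, Priestley duality $(\pf,\ClopUp)$, and the two identifications $\Idl(\ClopUp(X))\cong\OpUp(X)$ and $(\pt(L),\pi,\subseteq)\cong\pf(K(L))$ recalled in the preceding paragraphs. Your formal propagation of these natural isomorphisms around the triangle is precisely the bookkeeping the paper leaves implicit (modulo the harmless typo $(\alpha|_{K(M)})^{-1}$ for $(\alpha|_{K(L)})^{-1}$).
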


Since Esakia duality is a restricted version of Priestley duality, it is natural to restrict \cref{thm: Pries and Coh} to the category $\HA$ of Heyting algebras. Then $\Pries$ restricts to $\Esa$. In the next section we will introduce the category $\HeytFrm$ of Heyting frames and show that it plays the same role for $\HA$ and $\Esa$ that $\CohFrm$ plays for $\DLat$ and $\Pries$. 

\section{Heyting frames} \label{sec: Heyting frames}

\begin{definition} \label{def: HeytFrm}
We call a frame $L$ a \emph{Heyting frame} if $K(L)$ is a Heyting subalgebra of $L$.
\end{definition}

Since $K(L)$ being a Heyting subalgebra implies that $K(L)$ is a bounded sublattice of $L$, we see that every Heyting frame is a coherent frame.

\begin{theorem} \label{thm: HeytFrm}
Let $L$ be a coherent frame. Then $L$ is a Heyting frame iff the bounded sublattice $K(L)$ of $L$ is a Heyting algebra.
\end{theorem}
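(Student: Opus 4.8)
The forward implication is immediate from the definitions: if $L$ is a Heyting frame, then $K(L)$ is a Heyting subalgebra of $L$, so in particular $K(L)$ is a Heyting algebra. Hence the content lies in the converse. Assume $L$ is coherent and that $K(L)$, viewed as a bounded distributive lattice in its own right, is a Heyting algebra; write $\Rightarrow$ for its implication, and write $\to$ for the implication of $L$ (which exists since every frame is a complete Heyting algebra). Since $K(L)$ is already a bounded sublattice of $L$ by coherence, what must be shown is that $a \to b \in K(L)$ and $a\to b = a\Rightarrow b$ for all $a,b\in K(L)$; this is precisely what it means for $K(L)$ to be a Heyting subalgebra of $L$.

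First I would prove $a \Rightarrow b \le a \to b$. Since $a \Rightarrow b \in K(L)$, applying the residuation law of $K(L)$ with $c = a \Rightarrow b$ gives $(a\Rightarrow b) \wedge a \le b$, and then residuation in $L$ yields $a \Rightarrow b \le a \to b$.

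For the reverse inequality I would use that $L$ is algebraic, so $a \to b = \bigvee\{\, c \in K(L) : c \le a \to b \,\}$. Fix a compact $c \le a \to b$. Residuation in $L$ gives $c \wedge a \le b$, and since $K(L)$ is a sublattice of $L$ we have $c \wedge a \in K(L)$; hence residuation in $K(L)$ gives $c \le a \Rightarrow b$. Taking the join over all such $c$ yields $a \to b \le a \Rightarrow b$. Combining the two inequalities, $a \to b = a \Rightarrow b \in K(L)$, so $K(L)$ is closed under $\to$ and is therefore a Heyting subalgebra of $L$, i.e.\ $L$ is a Heyting frame.

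The argument is short, and the only place where the hypotheses of coherence are genuinely used is in the reverse inequality: one cannot compare $a \to b$ with $a \Rightarrow b$ directly, since $a\Rightarrow b$ is only known to behave well with respect to compact witnesses, so one must approximate $a \to b$ from below by compact elements (algebraicity) and transport each comparison into $K(L)$ via its own residuation (using that $K(L)$ is closed under $\wedge$). I do not expect any real obstacle beyond keeping track of the directions in which the two residuation laws are applied.
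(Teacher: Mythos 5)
Your proof is correct and follows essentially the same route as the paper's: both directions are obtained by playing the residuation law of $K(L)$ against that of $L$, and the reverse inequality uses join-density of compact elements in exactly the same way. Your extra remark that $c\wedge a\in K(L)$ (so that the meet in $K(L)$ agrees with the meet in $L$) is a detail the paper leaves implicit, but it is the same argument.
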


\begin{proof}
The left-to-right implication is obvious. For the right-to-left implication, let $\to$ be the implication on $L$. Since the bounded sublattice $K(L)$ is a Heyting algebra, it has an implication $\to'$. It is sufficient to show that $a \to b = a \to' b$ for each $a, b \in K(L)$. Because $a \wedge (a \to' b) \le b$, we have $a \to' b \le a \to b$. To see the reverse inequality, since $L$ is coherent, $a \to b = \bigvee \{ k \in K(L) : k \le a \to b\}$. Let $k \in K(L)$ with $k \le a \to b$. Because $a \wedge (a \to b) \le b$, we have $a \wedge k \le b$, so $k \le a \to' b$. Thus, $a \to b \le a \to' b$, and hence equality holds. Consequently, $L$ is a Heyting frame. 
\end{proof}

Since each $A \in \DLat$ is isomorphic to $K(\Idl(A))$, as an immediate consequence of \cref{thm: HeytFrm} we obtain:

\begin{corollary}
Let $A \in \DLat$. Then $A$ is a Heyting algebra iff $\Idl(A)$ is a Heyting frame.
\end{corollary}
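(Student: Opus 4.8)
The plan is to read this off directly from \cref{thm: HeytFrm} together with the equivalence between $\DLat$ and $\CohFrm$. First I would recall that the functor $\Idl : \DLat \to \CohFrm$ takes values in $\CohFrm$, so for any $A \in \DLat$ the ideal frame $\Idl(A)$ is in particular a coherent frame. This is exactly the hypothesis needed to invoke \cref{thm: HeytFrm}.

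Next I would apply \cref{thm: HeytFrm} with $L = \Idl(A)$: it gives that $\Idl(A)$ is a Heyting frame if and only if the bounded sublattice $K(\Idl(A))$ of $\Idl(A)$ is a Heyting algebra. Finally, since $A$ is isomorphic to $K(\Idl(A))$ in $\DLat$ (the unit of the equivalence $\DLat \simeq \CohFrm$ recalled above), and since being a Heyting algebra is a property of an object of $\DLat$ — namely the existence of a residual for $\wedge$ — that is transported along any bounded lattice isomorphism, we get that $K(\Idl(A))$ is a Heyting algebra if and only if $A$ is. Chaining the two equivalences produces the stated biconditional.

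I expect no real obstacle here: all the substantive work has already been carried out in \cref{thm: HeytFrm}, and the corollary is a formal consequence of it and of the isomorphism $A \cong K(\Idl(A))$. The only point deserving (routine) care is that "$A$ is a Heyting algebra" must be read as a property of the bounded distributive lattice $A$, so that it is preserved by the $\DLat$-isomorphism with $K(\Idl(A))$; once this is noted, the argument is a one-line composition of the facts assembled above.
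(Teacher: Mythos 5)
Your proposal is correct and follows exactly the paper's own route: the authors likewise derive the corollary as an immediate consequence of \cref{thm: HeytFrm} together with the isomorphism $A \cong K(\Idl(A))$. Your additional remarks about $\Idl(A)$ being coherent and about Heyting implication being transported along a bounded lattice isomorphism are just the routine details the paper leaves implicit.
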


We next give a dual characterization of Heyting frames.
 
\begin{theorem} \label{lem: L Heyting iff X Esakia}
\hfill
\begin{enumerate}[label = $(\arabic*)$]
\item If $X \in \Pries$, then $X$ is an Esakia space iff $\OpUp(X)$ is a Heyting frame.
\item If $L \in \CohFrm$, then $L$ is a Heyting frame iff $\pt(L)$ is an Esakia space.
\end{enumerate}
\end{theorem}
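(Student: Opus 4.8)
The plan is to reduce both statements to facts already assembled in the excerpt: \cref{thm: HeytFrm} (a coherent frame $L$ is a Heyting frame iff $K(L)$ is a Heyting algebra), \cref{lem: Esakia duality lemma(1)} (a Priestley space $X$ is an Esakia space iff $\ClopUp(X)$ is a Heyting algebra), and the dual equivalence between $\CohFrm$ and $\Pries$ recorded in \cref{thm: Pries and Coh}. No new constructions are needed; the work is in chaining the right equivalences together.

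For~(1), I would start from the isomorphism $\OpUp(X)\cong\Idl(\ClopUp(X))$ of frames recalled just before \cref{thm: Pries and Coh}. Under this isomorphism the compact elements of $\OpUp(X)$ correspond to the principal ideals of $\ClopUp(X)$, so $K(\OpUp(X))\cong\ClopUp(X)$ as bounded distributive lattices. Since $\OpUp(X)$ is coherent (it is the value of the functor $\OpUp:\Pries\to\CohFrm$), \cref{thm: HeytFrm} gives that $\OpUp(X)$ is a Heyting frame iff $K(\OpUp(X))\cong\ClopUp(X)$ is a Heyting algebra; and by \cref{lem: Esakia duality lemma(1)} the latter holds iff $X$ is an Esakia space. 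Concatenating the two equivalences proves~(1). (Equivalently, one may phrase this by applying the corollary ``$A$ is a Heyting algebra iff $\Idl(A)$ is a Heyting frame'' above with $A=\ClopUp(X)$.)

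For~(2), I would invoke part~(1) rather than redo the computation. Given $L\in\CohFrm$, the space $\pt(L)$ lies in $\Pries$, and \cref{thm: Pries and Coh} supplies a $\CohFrm$-isomorphism $\OpUp(\pt(L))\cong L$. Being a Heyting frame depends only on the complete-lattice structure of a coherent frame (it is the statement that $K(-)$ is closed under the implication of the frame, and implication is determined by the lattice structure), hence it transports along frame isomorphisms. Therefore, applying part~(1) with $X=\pt(L)$: $\pt(L)$ is an Esakia space iff $\OpUp(\pt(L))$ is a Heyting frame iff $L$ is a Heyting frame. Alternatively, one could argue directly via the isomorphism $\pt(L)\cong\pf(K(L))$ in $\Pries$ together with $\ClopUp(\pf(A))\cong A$, reaching the same conclusion through \cref{lem: Esakia duality lemma(1)} and \cref{thm: HeytFrm}.

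I do not expect a serious obstacle here; the only point requiring care is the bookkeeping that the isomorphisms in play ($\OpUp(X)\cong\Idl(\ClopUp(X))$, $K(\Idl(A))\cong A$, and $\OpUp(\pt(L))\cong L$) are isomorphisms of the appropriate structures, so that ``being a Heyting algebra'' and ``being a Heyting frame'' genuinely transfer across them. Once that is acknowledged, both equivalences follow immediately by composition.
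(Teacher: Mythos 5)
Your proposal is correct and follows essentially the same route as the paper: part (1) is exactly the paper's argument, chaining \cref{lem: Esakia duality lemma(1)} with \cref{thm: HeytFrm} via the identification of $\ClopUp(X)$ with the compact elements of $\OpUp(X)$. For part (2) the paper dualizes \cref{lem: Esakia duality lemma(1)} and uses the order-homeomorphism $\pt(L)\cong\pf(K(L))$ rather than deducing (2) from (1) through the isomorphism $\OpUp(\pt(L))\cong L$, but this is precisely the alternative you note at the end, and both reductions rest on the same isomorphisms recorded before \cref{thm: Pries and Coh}.
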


\begin{proof}
(1) Let $X\in\Pries$. By \cref{lem: Esakia duality lemma(1)}, $X$ is an Esakia space iff $\ClopUp(X)$ is a Heyting algebra. Since $\ClopUp(X)$ is the set of compact elements of the coherent frame $\OpUp(X)$, it follows from \cref{thm: HeytFrm} that $X$ is an Esakia space iff $\OpUp(X)$ is a Heyting frame.

(2) Let $L\in\CohFrm$. Dualizing \cref{lem: Esakia duality lemma(1)}, we have that $A\in\DLat$ is a Heyting algebra iff $\pf(A)$ is an Esakia space. 
Thus, $K(L)$ is a Heyting algebra iff $\pf(K(L))$ is an Esakia space. 
Since $\pt(L)$ is order-homeomorphic to $\pf(K(L))$ (see the paragraph before \cref{thm: Pries and Coh}), 
we conclude by \cref{thm: HeytFrm} that $L$ is a Heyting frame iff 
$\pt(L)$ is an Esakia space.
\end{proof}

We thus obtain a one-to-one correspondence between Heyting algebras, Heyting frames, and Esakia spaces. We next consider various morphisms between Heyting frames.

\begin{definition}
\hfill
\begin{enumerate}
\item Let $\HeytFrm^-$ be the category of Heyting frames and coherent frame homomorphisms.
\item Let $\HeytFrm$ be the category of Heyting frames and coherent frame homomorphisms which restrict to $\HA$-morphisms on compact elements.
\item Let $\HeytFrm^+$ be the category of Heyting frames and coherent frame homomorphisms preserving implication.
\item Let $\HeytFrm^*$ be the category of Heyting frames and complete lattice homomorphisms preserving compact elements.
\item Let $\HeytFrm^\dagger$ be the category of Heyting frames and complete Heyting homomorphisms preserving compact elements.
\end{enumerate}
\end{definition}

We clearly have the following inclusions of categories. Since all five categories have same objects, each inclusion is a wide inclusion. That each inclusion in the diagram is proper, and that $\HeytFrm^*$ is incomparable with $\HeytFrm$ and $\HeytFrm^+$ will follow from a similar result for the corresponding categories of Esakia spaces (see \cref{example}).

\begin{center} \label{diagram: Heyting Frames}
\begin{tikzpicture}
\node [below] at (0,0) {$\HeytFrm^\dagger$};
\draw [fill] (0,0) circle[radius = .05];
\node [left] at (-1, 1) {$\HeytFrm^+$};
\draw [fill] (-1, 1) circle[radius = .05];
\node [left] at (-1, 2) {$\HeytFrm^{\phantom{+}}$};
\draw [fill] (-1, 2) circle[radius = .05];
\node [right] at (1, 1.5) {$\HeytFrm^*$};
\draw [fill] (1, 1.5) circle[radius = .05];
\node [above] at (0,3) {$\HeytFrm^-$};
\draw [fill] (0,3) circle[radius = .05];
\draw (0,0) -- (-1, 1);
\draw (0,0) -- (1, 1.5);
\draw (-1, 1) -- (-1, 2);
\draw (1, 1.5) -- (0,3);
\draw (-1, 2) -- (0,3);
\end{tikzpicture}
\end{center}

We next introduce the corresponding categories of Esakia spaces. We recall (see, e.g., \cite[p.~94]{Har77}) that a constructible subset of a topological space is a finite union of sets of the form $U \setminus V$ where $U, V$ are open sets. By analogy, we introduce the following notion for Esakia spaces.

\begin{definition}
Let $X$ be an Esakia space. We call a subset $E$ of $X$ an \emph{Esakia-constructible}, or simply {\em E-constructible}, subset of $X$ if $E$ is a finite union of sets of the form $U \setminus V$ where $U, V$ are open upsets. 
\end{definition}

\begin{definition}
\hfill
\begin{enumerate}
\item Let $\Esa^-$ be the category of Esakia spaces and Priestley morphisms.
\item Let $\Esa$ be the category of Esakia spaces and Esakia morphisms.
\item Let $\Esa^+$ be the category of Esakia spaces and Priestley morphisms $f$ satisfying $f^{-1}(\down \cl E) = \down \cl f^{-1}(E)$ for each E-constructible subset $E$.
\item Let $\Esa^*$ be the category of Esakia spaces and Priestley morphisms $f$ satisfying $f^{-1}(\down \cl D) = \down \cl f^{-1}(D)$ for each downset $D$.
\item Let $\Esa^\dagger$ be the category of Esakia spaces and maps that are both $\Esa^+$ and $\Esa^*$-morphisms.
\end{enumerate}
\end{definition}

\begin{remark} \label{rem: conv}
Since $\down, \cl$, and $f^{-1}$ preserve finite unions, in the definition of $\Esa^+$ we may assume that $E = U \setminus V$ with $U, V$ open upsets. 
\end{remark}

We thus obtain the following wide inclusions of categories. The only inclusion that is not obvious is that $\Esa^+$ is a wide subcategory of $\Esa$. We prove this in \cref{lem: Esa+ is a sub of Esa}. 

\begin{center}
\begin{tikzpicture}
\node [below] at (0+1.75,0) {$\Esa^\dagger$};
\draw [fill] (0+1.75,0) circle[radius = .05];
\node [left] at (-1+1.75, 1) {$\Esa^+$};
\draw [fill] (-1+1.75, 1) circle[radius = .05];
\node [left] at (-1+1.75, 2) {$\Esa^{\phantom{+}}$};
\draw [fill] (-1+1.75, 2) circle[radius = .05];
\node [right] at (1+1.75, 1.5) {$\Esa^*$};
\draw [fill] (1+1.75, 1.5) circle[radius = .05];
\node [above] at (0+1.75,3) {$\Esa^-$};
\draw [fill] (0+1.75,3) circle[radius = .05];
\draw (0+1.75,0) -- (-1+1.75, 1);
\draw (0+1.75,0) -- (1+1.75, 1.5);
\draw (-1+1.75, 1) -- (-1+1.75, 2);
\draw (-1+1.75, 2) -- (0+1.75,3);
\draw (1+1.75, 1.5) -- (0+1.75,3);
\end{tikzpicture}
\end{center}

\begin{lemma} \label{lem: Esa+ is a sub of Esa}
$\Esa^+$ is a wide subcategory of $\Esa$.
\end{lemma}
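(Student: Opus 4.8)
The plan is to prove the lemma by showing directly that every $\Esa^+$-morphism is an $\Esa$-morphism; since both categories have the Esakia spaces as their objects, this yields the asserted wide inclusion. So let $f:X\to Y$ be an $\Esa^+$-morphism. In particular $f$ is a $\Pries$-morphism between Esakia spaces, so by the equivalence of $(a)$ and $(b)$ in \cref{lem: Esakia duality lemma(2)} it suffices to verify that $f^{-1}(\down W)=\down f^{-1}(W)$ for every clopen subset $W$ of $Y$.

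The key observation is that every clopen subset of a Priestley space is E-constructible. Indeed, the sets $U\setminus V$ with $U,V$ clopen upsets form a basis of clopen sets for the Priestley topology, so by compactness every clopen subset is a finite union of such sets; since clopen upsets are in particular open upsets, it follows that every clopen subset is E-constructible. Now fix a clopen $W\subseteq Y$ and apply the defining condition of $\Esa^+$-morphisms to $E=W$, obtaining $f^{-1}(\down\cl W)=\down\cl f^{-1}(W)$. Since $W$ is clopen it is closed, so $\cl W=W$; and since $f$ is continuous, $f^{-1}(W)$ is clopen, hence closed, so $\cl f^{-1}(W)=f^{-1}(W)$. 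Substituting these into the displayed equality gives $f^{-1}(\down W)=\down f^{-1}(W)$, as required.

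Hence $f$ satisfies condition $(b)$ of \cref{lem: Esakia duality lemma(2)} and is therefore an $\Esa$-morphism, which completes the argument. I do not expect a genuine obstacle here: the only external ingredient is the standard description of the clopen sets of a Priestley space, and the whole proof rests on the remark that the closure operators occurring in the $\Esa^+$-condition act trivially on clopen sets, so that this condition specializes at once to the clopen characterization of Esakia morphisms provided by \cref{lem: Esakia duality lemma(2)}.
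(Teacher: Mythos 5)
Your proof is correct and follows essentially the same route as the paper's: both reduce to condition (b) of \cref{lem: Esakia duality lemma(2)}, observe that every clopen set is E-constructible because the sets $U\setminus V$ with $U,V\in\ClopUp(Y)$ form a basis, and then note that the closure operators act trivially on clopen sets. The only difference is that you spell out the compactness argument and the fact that $f^{-1}(W)$ is clopen slightly more explicitly than the paper does.
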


\begin{proof}
Let $f : X \to Y$ be an $\Esa^+$-morphism. To see that $f$ is an $\Esa$-morphism, 
by \cref{lem: Esakia duality lemma(2)} it suffices to show that $f^{-1}(\down U) = \down f^{-1}(U)$ for each clopen subset $U$ of $Y$. 
Since $\{ C\setminus D : C,D\in\ClopUp(Y) \}$ is a basis for the topology on $Y$ and $U$ is clopen, it is a finite union of $U_i \setminus V_i$ with $U_i, V_i \in \ClopUp(Y)$. 
Therefore, $U$ is E-constructible, and hence
\[
f^{-1}(\down U) = f^{-1}(\down \cl U) = \down \cl f^{-1}(U) = \down f^{-1}(U)
\]
since $f$ is an $\Esa^+$-morphism.
\end{proof}

In the next example we show that each inclusion in the diagram above is proper, and that $\Esa^*$ is incomparable with $\Esa$ and $\Esa^+$. The duality results we obtain later in the section will then imply that the same holds for the corresponding categories of Heyting frames.

\begin{example}
\label{example}
\hfill

\begin{enumerate}[leftmargin = 0.3in]
\item\label{Esa- not Esa} 
Let $X_1 = \{x, y\}$ be the two-element chain where $x < y$. We view $X_1$ as an Esakia space with the discrete topology. Let $f : X_1 \to X_1$ be defined by $f(x) = f(y) = x$. Then $f$ is order-preserving. Therefore, $f^{-1}(D)$ is a downset for each downset $D$ of $X_1$. Since the topology on $X_1$ is discrete, this implies that $f$ is an $\Esa^*$-morphism.
 However, $f$ is not an $\Esa$-morphism because $f^{-1}(\down y) = X_1$ but $\down f^{-1}(y) = \varnothing$. Therefore, $\Esa^*$ is not a subcategory of $\Esa$. Since $\Esa^+$ is a wide subcategory of $\Esa$ and $\Esa^*$ is a wide subcategory of $\Esa^-$, it follows that $\Esa^*$ is not a subcategory of $\Esa^+$ and $\Esa$ is a proper wide subcategory of $\Esa^-$.

\item Let $E$ and $O$ be the sets of even and odd natural numbers. We set $X_2 = \mathbb{N} \cup \{\infty_E, \infty_O \}$ to be the two-point compactification of the discrete space $\mathbb{N}$, where $\infty_E$ is the limit point of $E$ and $\infty_O$ the limit point of $O$. We also set $X_3 =\mathbb{N} \cup \{\infty\}$ to be the one-point compactification of $\mathbb{N}$. Clearly $X_2$ and $X_3$ are Stone spaces, and hence they are Esakia spaces with the trivial order. Define $f : X_2 \to X_3$ by $f(n) = n$ for each $n \in \mathbb{N}$ and $f(\infty_E) = f(\infty_O) = \infty$. Since $f$ is continuous and the orders are trivial, $f$ is obviously an $\Esa$-morphism. On the other hand, 
\[
f^{-1}(\down \cl E) = f^{-1}(\cl E) = f^{-1}(E \cup \{\infty\}) = E \cup \{\infty_E, \infty_O \}
\]
while $\down \cl f^{-1}(E) = \cl(E) = E \cup \{\infty_E\}$. Therefore, since $E$ is E-constructible, $f$ is not an $\Esa^+$-morphism. Thus, $\Esa^+$ is a proper wide subcategory of $\Esa$.

\item \label[example]{Esa+ not Esa*} 
Let $X_4 = \mathbb{N} \cup \{\infty\}$ be the one-point compactification of the discrete space $\mathbb{N}$. Define $\le$ on $X_4$ by $u \le v$ iff $u = v$ or $v = \infty$. It is easy to verify that $X_4$ is an Esakia space. Recalling $X_1$ from (\ref{Esa- not Esa}), define $f : X_1 \to X_4$ by $f(x) = 0$ and $f(y) = \infty$. It is elementary to see that $f$ is an $\Esa$-morphism. Clearly $\mathbb{N}$ is a downset of $X_4$ and $\down \cl \mathbb{N} = X_4$. Therefore, $f^{-1}(\down \cl \mathbb{N}) = X_1$. But $f^{-1}(\mathbb{N}) = \{x\}$, and so $\down \cl f^{-1}(\mathbb{N}) = \{x\}$. Thus, $f$ is not an $\Esa^*$-morphism. Consequently, $\Esa$ and $\Esa^*$ are incomparable. Since 
$\Esa$ is a wide subcategory of $\Esa^-$, it follows that 
$\Esa^*$ is a proper wide subcategory of $\Esa^-$. 

We next show that $f$ is an $\Esa^+$-morphism. Let $W=U\setminus V$ where $U,V$ are open upsets of $X_2$. If $\infty \in W$, then $\down \cl W = X_2$, so $f^{-1}(\down \cl W) = X_1$. Also, $y \in f^{-1}(W)$, so $\down \cl f^{-1}(W) = X_1$. If $\infty \notin W$, then $W$ must be a finite downset. Therefore, $f^{-1}(\down \cl W)$ is $\{x\}$ or $\varnothing$ depending on whether $0 \in W$. 
Similarly, $\down \cl f^{-1}(W)$ is $\{x\}$ or $\varnothing$,
so $f$ is an $\Esa^+$-morphism by \cref{rem: conv}. Thus, $\Esa^+$ and $\Esa^*$ are also incomparable. 

\item From (\ref{Esa- not Esa}) and (\ref{Esa+ not Esa*}) it follows that $\Esa^\dagger$ is a proper wide subcategory of both $\Esa^*$ and $\Esa^+$.
\end{enumerate}
\end{example}

We next introduce the corresponding categories of Heyting algebras. Let $\HA^-$ be the full subcategory of $\DLat$ consisting of Heyting algebras. The following is an immediate consequence of the equivalence of $\DLat$ and $\CohFrm$, the dual equivalence of $\CohFrm$ and $\Pries$, and \cref{lem: L Heyting iff X Esakia}.

\begin{theorem} \label{thm: - case}
$\HA^-$ and $\HeytFrm^-$ are equivalent and dually equivalent to $\Esa^-$.
\end{theorem}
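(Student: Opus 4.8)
The plan is to obtain \cref{thm: - case} by restricting the equivalence and dual equivalences of \cref{thm: Pries and Coh} to suitable full subcategories. First I would note that $\HA^-$ is a full subcategory of $\DLat$ by definition, that $\HeytFrm^-$ has exactly the morphisms of $\CohFrm$ (coherent frame homomorphisms) and its objects are precisely the Heyting frames, which are coherent frames, and that $\Esa^-$ has exactly the morphisms of $\Pries$ (Priestley morphisms) and its objects are precisely the Esakia spaces; hence $\HeytFrm^-$ and $\Esa^-$ are full subcategories of $\CohFrm$ and $\Pries$, respectively. Consequently it is enough to check that each of the six functors $\Idl$, $\K$, $\pf$, $\ClopUp$, $\pt$, $\OpUp$ carries objects of the relevant full subcategory into objects of the relevant full subcategory; morphisms are then handled automatically, and, since an isomorphism in a full subcategory is an isomorphism in the ambient category, the natural isomorphisms of \cref{thm: Pries and Coh} restrict to natural isomorphisms between the restricted functors.

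Next I would verify the six object-level containments, each of which has already been established. The functor $\Idl$ sends $\HA^-$ into $\HeytFrm^-$ by the corollary following \cref{thm: HeytFrm}, and $\K$ sends $\HeytFrm^-$ into $\HA^-$ by \cref{thm: HeytFrm}. The functor $\OpUp$ sends $\Esa^-$ into $\HeytFrm^-$ by \cref{lem: L Heyting iff X Esakia}(1), and $\pt$ sends $\HeytFrm^-$ into $\Esa^-$ by \cref{lem: L Heyting iff X Esakia}(2). Finally, $\ClopUp$ sends $\Esa^-$ into $\HA^-$ by \cref{lem: Esakia duality lemma(1)}, and $\pf$ sends $\HA^-$ into $\Esa^-$ because, dualizing \cref{lem: Esakia duality lemma(1)} as in the proof of \cref{lem: L Heyting iff X Esakia}, $A \in \DLat$ is a Heyting algebra iff $\pf(A)$ is an Esakia space.

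With these containments in hand, the theorem follows from the standard fact that restricting an equivalence (resp. dual equivalence) of categories to full subcategories that are preserved by both functors of the pair again yields an equivalence (resp. dual equivalence), with unit and counit inherited by restriction. Thus $\Idl$ and $\K$ restrict to an equivalence $\HA^- \simeq \HeytFrm^-$, the pair $\pt, \OpUp$ restricts to a dual equivalence $\HeytFrm^- \simeq \Esa^-$, and $\pf, \ClopUp$ restricts to a dual equivalence $\HA^- \simeq \Esa^-$, with the triangle commuting up to natural isomorphism as in \cref{thm: Pries and Coh}. There is essentially no hard step here; the only points deserving attention are the observation that $\HeytFrm^-$ and $\Esa^-$ really are full subcategories (so that no morphism-level conditions arise) and the routine verification of the object-level containments above.
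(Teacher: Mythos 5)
Your proposal is correct and follows the same route as the paper, which simply records this theorem as an immediate consequence of the equivalence of $\DLat$ and $\CohFrm$, the dual equivalence of $\CohFrm$ and $\Pries$, and \cref{lem: L Heyting iff X Esakia}; you have merely written out the routine details (fullness of the three subcategories and the six object-level containments) that the paper leaves implicit.
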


We next establish that $\HeytFrm$ is equivalent to $\HA$ and hence dually equivalent to $\Esa$. For this we require the following lemma. We recall that for $A \in \DLat$, compact elements of $\Idl(A)$ are precisely the principal ideals $\down a$ for $a \in A$. We also recall that for a $\DLat$-morphism $h : A \to B$, the frame homomorphism $h^*:\Idl(A) \to \Idl(B)$ is given by $h^*(I)=\down h[I]$.

\begin{lemma} \label{lem: when is f an Esakia morphism}
Let $h:A\to B$ be a $\DLat$-morphism between Heyting algebras. Then $h$ is a Heyting homomorphism iff $h^*$ preserves implication of compact elements.
\end{lemma}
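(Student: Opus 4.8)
The plan is to exploit the correspondence between compact elements of $\Idl(A)$ and principal ideals, and to use the explicit formula for implication in the ideal frame given in \cref{lem: implication in ideals}. Write $\to$ for implication in $A$ and in $B$, and $\Rightarrow$ for implication in $\Idl(A)$ and $\Idl(B)$. Since compact elements of $\Idl(A)$ are exactly the principal ideals $\down a$, and $h^*(\down a) = \down h(a)$, the statement ``$h^*$ preserves implication of compact elements'' unpacks to the identity $h^*(\down a \Rightarrow \down b) = \down h(a) \Rightarrow \down h(b)$ for all $a,b \in A$. The key computation is to identify $\down a \Rightarrow \down b$: by \cref{lem: implication in ideals}, $\down a \Rightarrow \down b = \{ c \in A : c \wedge a \in \down b \} = \{ c \in A : c \wedge a \le b \} = \{ c \in A : c \le a \to b \} = \down(a \to b)$. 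So implication of compact elements in $\Idl(A)$ is again compact, and it is computed by the implication of $A$.

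With that in hand, both sides of the desired identity simplify. The left-hand side is $h^*(\down(a \to b)) = \down h(a \to b)$, and the right-hand side is $\down h(a) \Rightarrow \down h(b) = \down(h(a) \to h(b))$ by the same computation carried out in $B$. Hence $h^*$ preserves implication of compact elements iff $\down h(a \to b) = \down(h(a) \to h(b))$ for all $a,b \in A$, and since principal ideals determine their generators, this holds iff $h(a \to b) = h(a) \to h(b)$ for all $a,b$, i.e. iff $h$ is a Heyting homomorphism. This establishes both directions at once.

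One should be mildly careful about the edge cases and the direction of the equivalence I am claiming. For the forward direction, if $h$ is a Heyting homomorphism then the chain of equalities above goes through verbatim. For the converse, $h^*$ preserving implication of compact elements gives $\down h(a\to b) = \down(h(a)\to h(b))$, and taking generators (the top element of a principal ideal) recovers $h(a\to b) = h(a)\to h(b)$; note $h$ is already assumed to be a $\DLat$-morphism, so it preserves the lattice structure and we are only adding the implication identity. The main obstacle, such as it is, is simply to carry out the identification $\down a \Rightarrow \down b = \down(a\to b)$ cleanly from \cref{lem: implication in ideals} and to observe that this already lives among the compact elements; everything else is formal. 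I would present the argument as a single short display of equalities for each side, then conclude by the injectivity of $a \mapsto \down a$.
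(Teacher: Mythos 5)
Your proof is correct and follows essentially the same route as the paper: compute $\down a \to \down b = \down(a \to b)$ from \cref{lem: implication in ideals}, observe $h^*(\down a) = \down h(a)$, and conclude via the injectivity of $a \mapsto \down a$. No gaps; this matches the paper's argument.
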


\begin{proof}
Let $a, b \in A$. Since $A$ is a Heyting algebra, by \cref{lem: implication in ideals},
\begin{align*}
\down a \to \down b &= 
\{ x \in A : x \wedge y \le b \ \forall y \in \down a\} \\
&= \{ x \in A : x \wedge a \le b\}= \down (a \to b).
\end{align*}
Since $h$ is order-preserving, $h[\down x] \subseteq \down h(x)$, and hence $\down h[\down x] = \down h(x)$ for each $x \in A$. Therefore, using $\down a \to \down b = \down (a \to b)$, we have
\[
h^*(\down a \to \down b) = h^*\down (a \to b)= \down h[\down(a \to b)] = \down h(a \to b).
\]
On the other hand, a similar calculation gives
\[
h^*(\down a) \to h^*(\down b) = \down h[\down a] \to \down h[\down b] = \down h(a) \to \down h(b) = \down (h(a) \to h(b)).
\]
Therefore,
\begin{align*}
h^*(\down a \to \down b) = h^*(\down a) \to h^*(\down b) &\textrm{\ \ iff\ \  } \down h(a \to b) = \down (h(a) \to h(b)) \\
&\textrm{\ \ iff\ \ } h(a \to b) = h(a) \to h(b). 
\end{align*}
Thus, $h$ is a Heyting homomorphism iff $h^*$ preserves implication of compact elements.
\end{proof} 

It follows from 
\cref{lem: Esakia duality lemma(2)} that if $f$ is a Priestley morphism between Esakia spaces, then $f$ is an Esakia morphism iff $f^{-1}$ preserves implication on clopen upsets. 
We use this fact together with \cref{lem: when is f an Esakia morphism} to show that the equivalence and dual equivalence of \cref{thm: - case} restrict to yield \cref{thm: HA case}.
For this we note the following:

\begin{remark} \label{rem: category theory}
If $F : \mathcal{C} \to \mathcal{D}$ is part of an equivalence or dual equivalence and $\mathcal{C}'$ (resp.~$\mathcal{D}'$) is a subcategory of $\mathcal{C}$ (resp.~$\mathcal{D}$), then to see that $F$ restricts to part of an equivalence or dual equivalence between $\mathcal{C}'$ and $\mathcal{D}'$, it suffices to verify the following three conditions.
\begin{enumerate}
\item \label[remark]{rem1}If $A \in \mathcal{C}$, then $A \in \mathcal{C}'$ iff $F(A) \in \mathcal{D}'$.
\item \label[remark]{rem2}If $f$ is a morphism in $\mathcal{C}$, then $f$ is a morphism in $\mathcal{C}'$ iff $F(f)$ is a morphism in $\mathcal{D}'$.
\item \label[remark]{rem3}Isomorphisms in $\mathcal{C}$ between objects in $\mathcal{C}'$ are isomorphisms in $\mathcal{C}'$, and the same for $\mathcal{D}$ and $\mathcal{D}'$.
\end{enumerate}
\end{remark}

This remark will also be used to prove \cref{thm: + case,thm: * case} and \cref{cor: + case,cor: * case,thm: dagger case}.

\begin{theorem} \label{thm: HA case}
$\HeytFrm$ is equivalent to $\HA$ and dually equivalent to $\Esa$.
\end{theorem}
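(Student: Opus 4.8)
The plan is to restrict the equivalence and dual equivalence of \cref{thm: - case} using \cref{rem: category theory}. Since $\HA$, $\HeytFrm$, and $\Esa$ are wide subcategories of $\HA^-$, $\HeytFrm^-$, and $\Esa^-$ respectively, they have the same objects, so condition \cref{rem1} amounts to the object-level correspondences already in hand: $A\in\DLat$ is a Heyting algebra iff $\Idl(A)$ is a Heyting frame (which follows from \cref{thm: HeytFrm} since $A\cong K(\Idl(A))$), and $L\in\CohFrm$ is a Heyting frame iff $\pt(L)$ is an Esakia space (the second part of \cref{lem: L Heyting iff X Esakia}). Condition \cref{rem3} is also routine in every case: implication in a Heyting algebra is determined by the order via $a\to b=\bigvee\{c : a\wedge c\le b\}$, so any bounded-lattice isomorphism between Heyting algebras preserves implication; hence a $\HeytFrm^-$-isomorphism between Heyting frames restricts to a Heyting isomorphism on compact elements, and a Priestley isomorphism between Esakia spaces, being an order isomorphism, automatically satisfies the p-morphism condition and is therefore an Esakia morphism.

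The real content is condition \cref{rem2}, which I would verify first for $\Idl:\HA^-\to\HeytFrm^-$. Let $h:A\to B$ be a $\DLat$-morphism between Heyting algebras. Recall that the compact elements of $\Idl(A)$ are the principal ideals and that, since $h$ is order-preserving, $h^*(\down a)=\down h[\down a]=\down h(a)$; thus the restriction of $h^*$ to compact elements is, up to the isomorphisms $A\cong K(\Idl(A))$ and $B\cong K(\Idl(B))$, just $h$ itself, and in particular it is automatically a bounded-lattice homomorphism. Consequently $h^*$ is a $\HeytFrm$-morphism precisely when this restriction also preserves implication, which by \cref{lem: when is f an Esakia morphism} happens precisely when $h$ is a Heyting homomorphism. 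This gives condition \cref{rem2} for $\Idl$. Since a $\HeytFrm$-morphism restricts by definition to an $\HA$-morphism on compact elements and conversely, $\K$ likewise restricts, and we conclude that $\Idl$ and $\K$ cut down to an equivalence between $\HA$ and $\HeytFrm$.

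For the dual equivalence I would apply \cref{rem: category theory} to $\pt:\HeytFrm^-\to\Esa^-$. Given a $\HeytFrm^-$-morphism $\alpha:L\to M$ between Heyting frames, the natural order-homeomorphism $\pt(N)\cong\pf(K(N))$ (see the paragraph before \cref{thm: Pries and Coh}) identifies $\pt(\alpha)=\alpha^{-1}$ with $\pf(\K(\alpha))=\K(\alpha)^{-1}:\pf(K(M))\to\pf(K(L))$. By the observation recorded after \cref{lem: when is f an Esakia morphism} (namely, that for a Priestley morphism between Esakia spaces, being an Esakia morphism is equivalent to the preimage map preserving implication on clopen upsets), $\K(\alpha)^{-1}$ is an Esakia morphism iff $\K(\alpha)$ is a Heyting homomorphism, i.e.\ iff $\alpha$ is a $\HeytFrm$-morphism; this is condition \cref{rem2} for $\pt$. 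Hence $\pt$ and $\OpUp$ restrict to a dual equivalence between $\HeytFrm$ and $\Esa$, and commutativity of the triangle up to natural isomorphism is inherited from \cref{thm: Pries and Coh}. (Alternatively, once $\HA\simeq\HeytFrm$ is established one may obtain $\HeytFrm$ dually equivalent to $\Esa$ simply by composing with Esakia duality, \cref{thm: Esakia}.) The only point that needs a moment's attention, and is used repeatedly above, is that for a coherent frame homomorphism ``restricts to an $\HA$-morphism on compact elements'' is no stronger than ``preserves implication of compact elements,'' since frame homomorphisms already preserve finite meets, arbitrary joins, and the top and bottom.
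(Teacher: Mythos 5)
Your proposal is correct and follows essentially the same route as the paper: both restrict the equivalence and dual equivalence of \cref{thm: - case} via \cref{rem: category theory}, using \cref{lem: when is f an Esakia morphism} for the morphism condition on the algebra--frame side and \cref{lem: Esakia duality lemma(2)} on the space side. The only cosmetic difference is that for the dual equivalence you verify condition (\ref{rem2}) for $\pt$ (transporting along the order-homeomorphism $\pt(L)\cong\pf(K(L))$) whereas the paper checks it for $f^{-1}:\OpUp(Y)\to\OpUp(X)$; both are equally valid instances of the same argument.
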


\begin{proof}
We first show that $\HeytFrm$ is equivalent to $\HA$.  It is obvious that Condition~(\ref{rem1}) of \cref{rem: category theory} is satisfied since $\HeytFrm$ is a wide subcategory of $\HeytFrm^-$ and $\HA$ is a wide subcategory of $\HA^-$.
\cref{lem: when is f an Esakia morphism} shows that a $\DLat$-morphism $h$ between Heyting algebras is a $\HA$-morphism iff $h^*$ is a $\HeytFrm$-morphism. Therefore, Condition~(\ref{rem2}) is satisfied. That Condition~(\ref{rem3})  is also satisfied follows from the fact that isomorphisms in all categories of frames and Heyting algebras that we consider are poset isomorphisms. Thus, the equivalence of \cref{thm: - case} restricts to an equivalence between $\HA$ and $\HeytFrm$. 

Next we show that $\Esa$ is dually equivalent to $\HeytFrm$. Again, Condition~(\ref{rem1}) is obvious.
\cref{lem: Esakia duality lemma}(3) implies that a $\Pries$-morphism $f$ between Esakia spaces is an $\Esa$-morphism iff $f^{-1}:\ClopUp(Y)\to\ClopUp(X)$ is a $\HA$-morphism, which is equivalent to $f^{-1}:\OpUp(Y)\to\OpUp(X)$ being a $\HeytFrm$-morphism. Therefore, Condition~(\ref{rem2}) is satisfied. That Condition~(\ref{rem3}) is also satisfied follows from the fact that isomorphisms in all categories of Priestley and Esakia spaces that we consider are order-homeomorphisms (that is, poset isomorphisms and homeomorphisms).
Thus, the duality of \cref{thm: - case} between $\Esa^-$ and $\HeytFrm^-$ restricts to a duality between $\Esa$ and $\HeytFrm$.
\end{proof}

It is natural to work not only with those frame homomorphisms that preserve implication on compact elements, but also with those that are $\HA$-morphisms. This results in a restricted version of Esakia duality, which we describe next. For this we recall (see, e.g., \cite[p.~16]{Esa19}) that if $X$ is a topological space, then implication in the frame of open subsets of $X$ is given by
\[
U \to V = X \setminus \cl(U\setminus V). 
\]
If $X$ is a Priestley space, then $\OpUp(X)$ is exactly the frame of open sets of the corresponding spectral topology. Since the closure in the spectral topology is $\down \cl$ (see, e.g., \cite[Lem.~6.5(1)]{BBGK10}), implication in $\OpUp(X)$ is given by
\begin{equation}
U \to V = X \setminus \down \cl(U \setminus V). \label{eqn: implication}
\end{equation}

\begin{lemma} \label{lem: when does 1/f preserve implication}
Let $f:X \to Y$ be a Priestley morphism between Esakia spaces. Then $f^{-1} : \OpUp(Y) \to \OpUp(X)$ is a $\HA$-morphism iff $f^{-1} \down \cl E = \down \cl f^{-1} E$ for each E-constructible subset $E$ of $Y$.
\end{lemma}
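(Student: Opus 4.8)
The plan is to unwind the definition of a $\HA$-morphism using the explicit formula \eqref{eqn: implication} for implication in $\OpUp$. First I would note that $f^{-1}:\OpUp(Y)\to\OpUp(X)$ is always a frame homomorphism: preimages preserve arbitrary unions and finite intersections, and since $f$ is continuous and order-preserving, the preimage of an open upset is an open upset. In particular $f^{-1}$ sends $\varnothing$ to $\varnothing$ and $Y$ to $X$, so it is a bounded lattice homomorphism automatically. Hence $f^{-1}$ is a $\HA$-morphism iff it preserves implication, i.e.\ iff $f^{-1}(U\to V)=f^{-1}(U)\to f^{-1}(V)$ for all $U,V\in\OpUp(Y)$.

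Next I would compute both sides. Since $f^{-1}$ commutes with complements and with set-theoretic differences, \eqref{eqn: implication} gives
\[
f^{-1}(U\to V)=f^{-1}\bigl(Y\setminus\down\cl(U\setminus V)\bigr)=X\setminus f^{-1}\down\cl(U\setminus V),
\]
while
\[
f^{-1}(U)\to f^{-1}(V)=X\setminus\down\cl\bigl(f^{-1}(U)\setminus f^{-1}(V)\bigr)=X\setminus\down\cl f^{-1}(U\setminus V).
\]
Therefore $f^{-1}(U\to V)=f^{-1}(U)\to f^{-1}(V)$ holds iff $f^{-1}\down\cl(U\setminus V)=\down\cl f^{-1}(U\setminus V)$. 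This already settles the case of an E-constructible set of the form $E=U\setminus V$ with $U,V$ open upsets.

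For the general case, recall that an E-constructible subset of $Y$ is a finite union of sets $U_i\setminus V_i$ with $U_i,V_i$ open upsets, and that $f^{-1}$, $\down$, and $\cl$ all preserve finite unions; this is precisely the observation behind \cref{rem: conv}. Hence the identity $f^{-1}\down\cl E=\down\cl f^{-1}E$ holds for every E-constructible $E$ iff it holds for every $E=U\setminus V$ with $U,V$ open upsets, iff $f^{-1}$ preserves the implication of every pair $U,V\in\OpUp(Y)$, iff $f^{-1}$ is a $\HA$-morphism; this gives both implications at once.

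There is no serious obstacle here: the argument is a direct computation once the formula \eqref{eqn: implication} is available and one observes that $f^{-1}$ commutes with the Boolean operations $\setminus$ and complementation. The only point requiring a little care is that preservation of implication on all of $\OpUp(Y)$ need only be tested on the generating differences $U\setminus V$, but this is immediate because the implication formula itself involves just one such difference, so no reduction to basic opens is needed.
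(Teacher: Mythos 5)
Your proof is correct and follows essentially the same route as the paper: compute both sides of the implication-preservation condition via \eqref{eqn: implication}, reduce to sets of the form $U\setminus V$, and pass to general E-constructible sets using the fact that $f^{-1}$, $\down$, and $\cl$ preserve finite unions. The only difference is that you make explicit the (routine) observation that $f^{-1}$ is automatically a bounded lattice homomorphism, which the paper leaves implicit.
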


\begin{proof}
Let $U,V$ be open upsets of $Y$. By \cref{eqn: implication},
\[
f^{-1}(U \to V) = f^{-1}(Y \setminus \down \cl(U \setminus V)) = X \setminus f^{-1} \down \cl(U \setminus V)
\]
and
\[
f^{-1}(U) \to f^{-1}(V) = X \setminus \down \cl(f^{-1}(U) \setminus f^{-1}(V)) = X \setminus \down \cl f^{-1}(U \setminus V). 
\]
Therefore, $f^{-1}$ preserves implication iff $f^{-1} \down \cl E = \down \cl f^{-1}E$ for each $E = U \setminus V$ with $U, V \in \OpUp(Y)$. Since $f^{-1}$, $\down$, and $\sf{cl}$ preserve finite unions, $f^{-1}$ is a $\HA$-morphism 
iff $f^{-1} \down \cl E = \down \cl f^{-1}E$ for each E-constructible subset $E$ of $Y$.
\end{proof}

\begin{theorem} \label{thm: + case}
$\HeytFrm^+$ is dually equivalent to $\Esa^+$.
\end{theorem}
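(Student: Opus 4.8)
The plan is to derive \cref{thm: + case} by restricting the dual equivalence between $\HeytFrm^-$ and $\Esa^-$ of \cref{thm: - case} to the wide subcategory $\HeytFrm^+$ of $\HeytFrm^-$ and the wide subcategory $\Esa^+$ of $\Esa^-$, using the mechanism of \cref{rem: category theory}. Since $\HeytFrm^+$ and $\HeytFrm^-$ have the same objects, and likewise $\Esa^+$ and $\Esa^-$, and since \cref{thm: - case} already establishes that a Heyting frame $L$ corresponds under $\pt$ to an Esakia space, Condition~(\ref{rem1}) of \cref{rem: category theory} holds trivially. Condition~(\ref{rem3}) is handled exactly as in the proof of \cref{thm: HA case}: isomorphisms in all the categories of frames under consideration are poset isomorphisms, and isomorphisms in all the categories of Esakia spaces under consideration are order-homeomorphisms.

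The substance lies in Condition~(\ref{rem2}): for a Priestley morphism $f : X \to Y$ between Esakia spaces, $f$ is an $\Esa^+$-morphism if and only if $f^{-1} : \OpUp(Y) \to \OpUp(X)$ is a $\HeytFrm^+$-morphism. For this, recall from \cref{thm: Pries and Coh} that for \emph{any} Priestley morphism $f$ the map $f^{-1}$ is a coherent frame homomorphism. Hence $f^{-1}$ is a $\HeytFrm^+$-morphism precisely when it moreover preserves implication; and by \cref{lem: when does 1/f preserve implication} this is the case if and only if $f^{-1}(\down\cl E) = \down\cl f^{-1}(E)$ for every E-constructible subset $E$ of $Y$, which is exactly the condition defining an $\Esa^+$-morphism. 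This verifies Condition~(\ref{rem2}).

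With the three conditions in hand, \cref{rem: category theory} yields that the contravariant functors $\OpUp$ and $\pt$ establishing the dual equivalence of $\HeytFrm^-$ and $\Esa^-$ restrict to a dual equivalence between $\HeytFrm^+$ and $\Esa^+$, which is precisely \cref{thm: + case}. I do not anticipate a genuine obstacle here: the argument is essentially bookkeeping layered on top of \cref{lem: when does 1/f preserve implication}. The only points deserving a little care are that a $\HeytFrm^+$-morphism is by definition a coherent frame homomorphism that \emph{additionally} preserves implication (so coherence of $f^{-1}$ is automatic and only implication-preservation must be checked), and that \cref{lem: when does 1/f preserve implication} is stated exactly for Priestley morphisms between Esakia spaces — the morphisms of $\Esa^-$ — so it applies without any prior assumption that $f$ be an Esakia morphism.
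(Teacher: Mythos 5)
Your proposal is correct and follows essentially the same route as the paper: the paper's proof likewise invokes \cref{lem: when does 1/f preserve implication} to verify that a Priestley morphism $f$ is an $\Esa^+$-morphism iff $f^{-1}$ is a $\HeytFrm^+$-morphism, and then restricts the duality between $\HeytFrm^-$ and $\Esa^-$ via the mechanism of \cref{rem: category theory}. Your write-up merely makes explicit the bookkeeping (Conditions~(1) and~(3) of \cref{rem: category theory}) that the paper leaves implicit.
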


\begin{proof}
\cref{lem: when does 1/f preserve implication} shows that a Priestley morphism $f : X \to Y$ is an $\Esa^+$-morphism iff $f^{-1} : \OpUp(Y) \to \OpUp(X)$ is a $\HeytFrm^+$-morphism. Therefore, the duality between $\HeytFrm^-$ and $\Esa^-$ restricts to a duality between $\HeytFrm^+$ and $\Esa^+$.
\end{proof}

\begin{definition} \label{rem: +}
Let $\HA^+$ be the wide subcategory of $\HA^-$ whose morphisms $h : A \to B$ have the property that $h^* : \Idl(A) \to \Idl(B)$ is a $\HA$-morphism.
\end{definition}

It follows from \cref{lem: when is f an Esakia morphism} that $\HA^+$ is a wide subcategory of $\HA$. As an immediate consequence of \cref{thm: - case,thm: + case} we obtain:

\begin{corollary} \label{cor: + case}
$\HeytFrm^+$ is equivalent to $\HA^+$ and dually equivalent to $\Esa^+$.
\end{corollary}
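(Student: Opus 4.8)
The plan is to apply \cref{rem: category theory} to the equivalence $\Idl : \HA^- \to \HeytFrm^-$ furnished by \cref{thm: - case}, showing that it restricts to an equivalence between $\HA^+$ and $\HeytFrm^+$, and then to combine this with the dual equivalence $\HeytFrm^+ \simeq \Esa^+$ of \cref{thm: + case}. Thus the three conditions of \cref{rem: category theory} need to be verified for $F = \Idl$ with $\mathcal{C}' = \HA^+$ and $\mathcal{D}' = \HeytFrm^+$.

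For Condition~(\ref{rem1}), I would note that $\HA^+$ and $\HeytFrm^+$ are wide subcategories of $\HA^-$ and $\HeytFrm^-$ respectively, so both sides of the biconditional always hold and the object condition is automatic. For Condition~(\ref{rem2}), the point is that this is essentially the definition of $\HA^+$ given in \cref{rem: +}: a $\DLat$-morphism $h : A \to B$ between Heyting algebras lies in $\HA^+$ exactly when $h^* : \Idl(A) \to \Idl(B)$ is a $\HA$-morphism. Here $h^* = \Idl(h)$ is already a coherent frame homomorphism, hence a bounded lattice homomorphism, so being a $\HA$-morphism amounts to nothing more than preserving implication; that is, $h \in \HA^+$ iff $\Idl(h)$ is a $\HeytFrm^+$-morphism, which is Condition~(\ref{rem2}). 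Condition~(\ref{rem3}) follows, as in the proofs of \cref{thm: HA case,thm: + case}, from the fact that isomorphisms in all the categories of frames and Heyting algebras under consideration are poset isomorphisms, so an isomorphism of $\HA^-$ (resp.\ $\HeytFrm^-$) between objects of $\HA^+$ (resp.\ $\HeytFrm^+$) is automatically an isomorphism there.

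Granting these three conditions, \cref{rem: category theory} yields an equivalence $\HA^+ \simeq \HeytFrm^+$, and composing with the dual equivalence of \cref{thm: + case} gives that $\HA^+$ is dually equivalent to $\Esa^+$, which is the assertion of the corollary. I do not expect a genuine obstacle here: the only step requiring a moment's care is the observation that, for a map already known to be a coherent frame homomorphism, being a $\HA$-morphism is equivalent to preserving implication—so that \cref{rem: +} is literally recording Condition~(\ref{rem2})—but this is immediate since frame homomorphisms preserve finite meets, finite joins, and both bounds.
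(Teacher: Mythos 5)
Your proposal is correct and follows exactly the route the paper intends: the paper states the corollary as an immediate consequence of \cref{thm: - case,thm: + case}, with \cref{rem: category theory} doing the work and Condition~(\ref{rem2}) holding by the very definition of $\HA^+$ in \cref{rem: +}. You have simply written out the details that the paper leaves implicit.
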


It is well known that a continuous map $f:X\to Y$ between topological spaces is open iff 
$f^{-1}$ commutes with the closure operator for all subsets of $Y$ (see, e.g., \cite[pp.~99--100]{RS63}). 
Since the closure in the spectral topology of a Priestley space is $\down\,\cl$, the condition of \cref{lem: when does 1/f preserve implication} says that $f^{-1}$ commutes with the spectral closure operator on E-constructible subsets of $Y$. We give an example showing that such a map may not be open in the spectral topologies of $X$ and $Y$. 

\begin{example}
Let $X =\{*\}$ be a one-point space and $X_4$ the one-point compactification considered in \cref{Esa+ not Esa*}. 
Define $f : X \to X_4$ by $f(*) = \infty$. Then $f$ is an $\Esa$-morphism and is not an open map since $\infty$ is not an isolated point of the spectral topology on $X_4$. On the other hand, observe that $X$ is discrete and open upsets of $X_4$ are clopen. Therefore, each E-constructible subset $E$ of $X_4$ is clopen. Thus, by \cref{lem: Esakia duality lemma(2)},
\[
\down \cl f^{-1}(E) = \down f^{-1}(E) = f^{-1}( \down E) = f^{-1}(\down \cl E).
\]
Consequently, $f$ satisfies the condition of \cref{lem: when does 1/f preserve implication}. 
\end{example}

We next turn our attention to homomorphisms that preserve arbitrary meets.

\begin{lemma} \label{lem: when does 1/f preserve meet}
Let $f:X \to Y$ be a Priestley morphism between Priestley spaces. Then $f^{-1} : \OpUp(Y) \to \OpUp(X)$ preserves arbitrary meets iff $f^{-1} \down \cl D = \down \cl f^{-1} D$ for any downset $D$ of $Y$.
\end{lemma}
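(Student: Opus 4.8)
The plan is to make the frame meet in $\OpUp(Y)$ explicit and then read off the equivalence by taking complements. Recall that $\OpUp(X)$ is precisely the frame of open sets of the spectral topology $\tau$ on $X$, in which arbitrary joins are unions and finite meets are intersections; hence for a family $\{U_i : i \in I\} \subseteq \OpUp(Y)$ the meet $\bigwedge_i U_i$ is the $\tau$-interior of $\bigcap_i U_i$, i.e.\ $Y \setminus \cl_\tau\!\big(\bigcup_i (Y \setminus U_i)\big)$. Since the closure operator of $\tau$ is $\down\,\cl$ (see \cite[Lem.~6.5(1)]{BBGK10}, as recalled before \cref{eqn: implication}), this says $\bigwedge_i U_i = Y \setminus \down\cl\big(\bigcup_i (Y \setminus U_i)\big)$; and because $f^{-1}$ commutes with arbitrary unions and with finite intersections, the same computation in $X$ gives $\bigwedge_i f^{-1}(U_i) = X \setminus \down\cl\big(f^{-1}\!\big(\bigcup_i (Y \setminus U_i)\big)\big)$.

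First I would record why downsets are exactly the right test sets. In a Priestley space every principal downset $\down y$ is closed: if $y \not\le x$ then Priestley separation yields a clopen upset containing $y$ but not $x$, whose complement is a clopen downset containing $x$ but not $y$, so $\down y$ is an intersection of clopen downsets. Consequently every downset $D$ of $Y$ equals $\bigcup_{y \in D} \down y$, a union of closed downsets, so $Y \setminus D = \bigcap_{y \in D}(Y \setminus \down y)$ is an intersection of open upsets; conversely, for any family $\{U_i\} \subseteq \OpUp(Y)$ the set $\bigcup_i (Y \setminus U_i)$ is a union of closed downsets, hence a downset. Thus, as $\{U_i\}$ ranges over all families of open upsets of $Y$, the set $\bigcup_i (Y \setminus U_i)$ ranges exactly over all downsets of $Y$.

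Putting the two paragraphs together, for a fixed family $\{U_i\}$ and $D := \bigcup_i (Y \setminus U_i)$ we have $f^{-1}(\bigwedge_i U_i) = \bigwedge_i f^{-1}(U_i)$ iff $f^{-1}(Y \setminus \down\cl D) = X \setminus \down\cl f^{-1}(D)$, i.e.\ iff $f^{-1}(\down\cl D) = \down\cl f^{-1}(D)$. Since $D$ runs through all downsets of $Y$ as the family varies, $f^{-1}$ preserves arbitrary meets iff $f^{-1}\,\down\cl D = \down\cl f^{-1} D$ for every downset $D$ of $Y$, as claimed. I do not anticipate a real obstacle; the only delicate points are the complement bookkeeping (the frame meet is a $\tau$-interior, not a plain intersection) and ensuring that the test sets cover all downsets, which is exactly what the closedness of $\down y$ provides. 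Alternatively one can avoid naming $\tau$ and argue directly in $\OpUp(Y)$, using that the largest open upset contained in a set $S$ is $Y \setminus \down\cl(Y \setminus S)$; this is the same calculation.
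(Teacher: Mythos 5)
Your proof is correct and takes essentially the same route as the paper's: both compute the meet in $\OpUp(Y)$ as the spectral interior of the intersection (the paper writes it as $\Box\,\Int\bigcap U_\alpha$ with $\Box E=Y\setminus\down(Y\setminus E)$, which is the same operator as your $Y\setminus\down\cl(Y\setminus\cdot)$), and both then identify the relevant test sets via the closedness of principal downsets in a Priestley space—you parametrize downsets as unions of complements of open upsets, the paper dually notes that upsets are exactly intersections of open upsets. The complement bookkeeping matches the paper's final step verbatim.
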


\begin{proof}
The frame homomorphism $f^{-1}$ preserves arbitrary meets iff for each family $\{U_\alpha\}$ of open upsets of $Y$, we have
\[
f^{-1}\left(\bigwedge U_\alpha \right) = \bigwedge f^{-1}(U_\alpha).
\]
We recall that the largest upset contained in a set $E\subseteq Y$ is $\Box\,E=Y\setminus\down(Y\setminus E)$, where we use the box notation as it is common in modal logic (see, e.g., \cite{CZ97}). Since in every Priestley space the downset of a closed set is closed (see, e.g., \cite[Prop.~2.6]{Pri84}), if $E$ is open, then $\Box E$ is open. Therefore, $\bigwedge U_\alpha = \Box\left(\Int\bigcap U_\alpha\right)$. 
Thus, 
\[
f^{-1}\left(\bigwedge U_\alpha\right) = f^{-1}\left(\Box\,\Int \bigcap U_\alpha\right)
\]
and
\[
\bigwedge f^{-1}(U_\alpha) = \Box\,\Int \bigcap f^{-1}(U_\alpha) = \Box\,\Int f^{-1}\left(\bigcap U_\alpha\right). 
\]
Since in every Priestley space the downset of a point is closed, upsets are precisely intersections of open upsets. 
Thus, $f^{-1}$ preserves meets iff $f^{-1}(\Box\,\Int U) = \Box\,\Int f^{-1}(U)$ for each upset $U$ of $Y$. This happens iff $f^{-1}(Y \setminus \down \cl (Y\setminus U)) = X \setminus \down \cl(X\setminus f^{-1}(U))$, which happens iff $f^{-1} \down \cl (Y\setminus U) = \down \cl (X\setminus f^{-1}(U)) = \down \cl f^{-1}(Y\setminus U)$. Since each downset $D$ of $Y$ is of the form $Y\setminus U$ for some upset $U$ of $Y$, the result follows.
\end{proof}

\begin{theorem} \label{thm: * case}
$\HeytFrm^*$ is dually equivalent to $\Esa^*$.
\end{theorem}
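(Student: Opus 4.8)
The plan is to deduce this from the dual equivalence between $\HeytFrm^-$ and $\Esa^-$ established in \cref{thm: - case}, via the mechanism recorded in \cref{rem: category theory}. Concretely, I would take the contravariant functors $\OpUp$ and $\pt$ witnessing that dual equivalence and show they restrict along the wide subcategory inclusions $\HeytFrm^* \hookrightarrow \HeytFrm^-$ and $\Esa^* \hookrightarrow \Esa^-$. First I would note that $\HeytFrm^*$ really is a wide subcategory of $\HeytFrm^-$: a complete lattice homomorphism preserving compact elements is in particular a frame homomorphism preserving compact elements, i.e.\ a coherent frame homomorphism. It then remains to verify Conditions~(\ref{rem1})--(\ref{rem3}) of \cref{rem: category theory}.

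Condition~(\ref{rem1}) is immediate, since $\Esa^*$ and $\HeytFrm^*$ have the same objects as $\Esa^-$ and $\HeytFrm^-$. For Condition~(\ref{rem2}), the key observation is that for a Priestley morphism $f : X \to Y$ between Esakia spaces, $f^{-1} : \OpUp(Y) \to \OpUp(X)$ is already a coherent frame homomorphism; hence it is a $\HeytFrm^*$-morphism precisely when it additionally preserves arbitrary meets. By \cref{lem: when does 1/f preserve meet}, this holds iff $f^{-1} \down \cl D = \down \cl f^{-1} D$ for every downset $D$ of $Y$, which is exactly the defining condition for $f$ to be an $\Esa^*$-morphism. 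So $f$ is an $\Esa^*$-morphism iff $f^{-1}$ is a $\HeytFrm^*$-morphism. Condition~(\ref{rem3}) is handled exactly as in the proof of \cref{thm: HA case}: isomorphisms in the relevant categories are poset isomorphisms (order-homeomorphisms on the space side), hence automatically lie in the subcategories $\Esa^*$ and $\HeytFrm^*$. Combining the three conditions, the dual equivalence of \cref{thm: - case} restricts to the desired dual equivalence.

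I do not expect a genuine obstacle here: the real content has already been isolated in \cref{lem: when does 1/f preserve meet}, and what remains is bookkeeping with \cref{rem: category theory}. The one point worth stating carefully is that $f^{-1}$ is automatically a coherent frame homomorphism, so the only extra requirement imposed by passing to $\HeytFrm^*$ is preservation of arbitrary meets — this is precisely what makes \cref{lem: when does 1/f preserve meet} directly applicable, rather than needing a stronger characterization.
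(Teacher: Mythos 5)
Your proposal is correct and follows the same route as the paper: the paper's proof likewise invokes \cref{lem: when does 1/f preserve meet} to match $\Esa^*$-morphisms with $\HeytFrm^*$-morphisms and then restricts the duality of \cref{thm: - case} via \cref{rem: category theory}. Your additional remarks (that $f^{-1}$ is automatically a coherent frame homomorphism, so only preservation of arbitrary meets is at stake, and the verification of Conditions~(\ref{rem1}) and~(\ref{rem3})) are just the bookkeeping the paper leaves implicit.
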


\begin{proof}
It follows from \cref{lem: when does 1/f preserve meet} that a Priestley morphism $f : X \to Y$ is an $\Esa^*$-morphism iff $f^{-1} : \OpUp(Y) \to \OpUp(X)$ is a $\HeytFrm^*$-morphism. Therefore, the duality between $\HeytFrm^-$ and $\Esa^-$ restricts to a duality between $\HeytFrm^*$ and $\Esa^*$.
\end{proof}

\begin{remark}
It follows from \cref{Esa+ not Esa*,thm: * case} that $\HeytFrm^+$ and $\HeytFrm^*$ are incomparable. The situation changes when we restrict to subfit Heyting frames, where we recall (see, e.g, \cite[p.~73]{PP12}) that a frame $L$ is \emph{subfit} if whenever $a, b \in L$ with $a \not\le b$, there is $c \in L$ with $a \vee c = 1$ and $b \vee c \ne 1$. Indeed, if $L,M$ are Heyting frames and $L$ is subfit, then it follows from \cite[Prop.~V.1.8]{PP12} that each $\HeytFrm^*$-morphism $\alpha : L\to M$ is a $\HeytFrm^+$-morphism. 
\end{remark}

\begin{definition} \label{rem: *}
Let $\HA^*$ be the wide subcategory of $\HA^-$ whose morphisms $h : A \to B$ in addition satisfy the property that $h^* : \Idl(A) \to \Idl(B)$ is a complete lattice homomorphism.  
\end{definition}

As an immediate consequence of \cref{thm: - case,thm: * case} we obtain:

\begin{corollary} \label{cor: * case} 
$\HeytFrm^*$ is equivalent to $\HA^*$ and dually equivalent to $\Esa^*$.
\end{corollary}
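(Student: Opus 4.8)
The plan is to obtain \cref{cor: * case} as an immediate consequence of \cref{thm: - case} and \cref{thm: * case}, exactly as \cref{cor: + case} was obtained from \cref{thm: - case} and \cref{thm: + case}. Recall from \cref{def: *} (the definition just above) that $\HA^*$ is defined as the wide subcategory of $\HA^-$ whose morphisms $h : A \to B$ are those for which $h^* : \Idl(A) \to \Idl(B)$ is a complete lattice homomorphism. So the dual equivalence between $\HeytFrm^*$ and $\Esa^*$ is already \cref{thm: * case}, and it remains only to show that the equivalence between $\HA^-$ and $\HeytFrm^-$ of \cref{thm: - case} restricts to an equivalence between $\HA^*$ and $\HeytFrm^*$.

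First I would invoke \cref{rem: category theory} with $F = \Idl$, $\mathcal{C} = \HA^-$, $\mathcal{D} = \HeytFrm^-$, $\mathcal{C}' = \HA^*$, $\mathcal{D}' = \HeytFrm^*$. Condition~(\ref{rem1}) is immediate since both $\HA^*$ and $\HeytFrm^*$ are wide subcategories of $\HA^-$ and $\HeytFrm^-$ respectively, so the objects are unchanged. Condition~(\ref{rem2}) is essentially the definition of $\HA^*$: a $\DLat$-morphism $h$ between Heyting algebras is an $\HA^*$-morphism iff $h^* : \Idl(A) \to \Idl(B)$ is a complete lattice homomorphism preserving compact elements, which is precisely the condition for $h^*$ to be a $\HeytFrm^*$-morphism (the preservation of compact elements is automatic since $\Idl$ lands in $\CohFrm \subseteq \AlgFrm$, where the morphisms preserve compact elements by definition). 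Condition~(\ref{rem3}) holds because, as noted in the proof of \cref{thm: HA case}, isomorphisms in all the categories of frames and Heyting algebras under consideration are poset isomorphisms, so an isomorphism in $\HA^-$ between objects of $\HA^*$ is automatically an $\HA^*$-isomorphism, and likewise on the frame side.

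I do not expect any real obstacle here: the only slightly delicate point is making sure that being a ``complete lattice homomorphism preserving compact elements'' is the same as being a $\HeytFrm^*$-morphism, but that is just unwinding the definition of $\HeytFrm^*$, together with the observation that $h^*$ automatically preserves compact elements. Combining the restricted equivalence with \cref{thm: * case} then gives both halves of the statement, and the commutativity up to natural isomorphism is inherited from \cref{thm: - case}. So the proof is a short application of \cref{rem: category theory}.

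Concretely, I would write:

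\begin{proof}
The dual equivalence between $\HeytFrm^*$ and $\Esa^*$ is \cref{thm: * case}. For the equivalence between $\HeytFrm^*$ and $\HA^*$, we apply \cref{rem: category theory} to the functor $\Idl$ establishing the equivalence of \cref{thm: - case}. Condition~(\ref{rem1}) holds since $\HA^*$ and $\HeytFrm^*$ are wide subcategories of $\HA^-$ and $\HeytFrm^-$. By \cref{def: *}, a $\DLat$-morphism $h$ between Heyting algebras is an $\HA^*$-morphism iff $h^* : \Idl(A) \to \Idl(B)$ is a complete lattice homomorphism; since $\Idl$ maps into $\CohFrm$, the morphism $h^*$ automatically preserves compact elements, so this is equivalent to $h^*$ being a $\HeytFrm^*$-morphism. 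Thus Condition~(\ref{rem2}) holds. Finally, Condition~(\ref{rem3}) holds because isomorphisms in all categories of frames and Heyting algebras we consider are poset isomorphisms. Therefore the equivalence of \cref{thm: - case} restricts to an equivalence between $\HA^*$ and $\HeytFrm^*$.
\end{proof}
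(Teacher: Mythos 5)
Your proposal is correct and takes essentially the same approach as the paper, which states \cref{cor: * case} as an immediate consequence of \cref{thm: - case,thm: * case}; your use of \cref{rem: category theory} merely makes explicit the routine restriction argument (Condition~(2) being essentially the definition of $\HA^*$) that the paper leaves implicit. The only slip is notational: the definition of $\HA^*$ carries the label \texttt{rem: *} in the source rather than \texttt{def: *}, so your cross-reference would need adjusting.
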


\begin{definition} \label{rem: dagger}
Let $\HA^\dagger=\HA^+\cap\HA^*$.
\end{definition}

Putting \cref{cor: + case,cor: * case} together yields:

\begin{corollary} \label{thm: dagger case}
$\HeytFrm^\dagger$ is equivalent to $\HA^\dagger$ and dually equivalent to $\Esa^\dagger$.
\end{corollary}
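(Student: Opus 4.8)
The plan is to obtain \cref{thm: dagger case} as a straightforward consequence of the two corollaries already established, namely \cref{cor: + case} and \cref{cor: * case}, together with the fact that intersections of subcategories are compatible with equivalences and dual equivalences. By \cref{rem: dagger}, $\HA^\dagger = \HA^+ \cap \HA^*$; by the definition preceding the diagram of Heyting frames, $\HeytFrm^\dagger = \HeytFrm^+ \cap \HeytFrm^*$ (these are all wide subcategories of $\HeytFrm^-$ with the same objects, so the intersection is the category whose morphisms are those that are simultaneously $\HeytFrm^+$- and $\HeytFrm^*$-morphisms); and by the definition of $\Esa^\dagger$, an $\Esa^\dagger$-morphism is precisely one that is both an $\Esa^+$- and an $\Esa^*$-morphism.

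The key step is to apply \cref{rem: category theory} once more, this time to the functors $\Idl : \HA^- \to \HeytFrm^-$ and $\OpUp : \Esa^- \to \HeytFrm^-$ from \cref{thm: - case}. Condition~(\ref{rem1}) is again immediate, since all six categories in play are wide subcategories of $\HeytFrm^-$, $\HA^-$, and $\Esa^-$, so the objects coincide. For Condition~(\ref{rem2}), first consider the equivalence between $\HA^-$ and $\HeytFrm^-$: by \cref{cor: + case} a $\DLat$-morphism $h$ between Heyting algebras is a $\HA^+$-morphism iff $h^*$ is a $\HeytFrm^+$-morphism, and by \cref{cor: * case} it is a $\HA^*$-morphism iff $h^*$ is a $\HeytFrm^*$-morphism; conjoining these two biconditionals gives that $h$ is a $\HA^\dagger$-morphism iff $h^*$ is a $\HeytFrm^\dagger$-morphism. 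Dually, for the dual equivalence between $\Esa^-$ and $\HeytFrm^-$: by \cref{thm: + case} a Priestley morphism $f$ between Esakia spaces is an $\Esa^+$-morphism iff $f^{-1}$ is a $\HeytFrm^+$-morphism, and by \cref{thm: * case} it is an $\Esa^*$-morphism iff $f^{-1}$ is a $\HeytFrm^*$-morphism; conjoining gives that $f$ is an $\Esa^\dagger$-morphism iff $f^{-1}$ is a $\HeytFrm^\dagger$-morphism. Condition~(\ref{rem3}) holds for the same reason as in \cref{thm: HA case}: isomorphisms in all the categories of frames, Heyting algebras, Priestley spaces, and Esakia spaces under consideration are just poset isomorphisms (respectively order-homeomorphisms), which automatically lie in every wide subcategory.

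Assembling these, \cref{rem: category theory} yields that the equivalence between $\HA^-$ and $\HeytFrm^-$ restricts to an equivalence between $\HA^\dagger$ and $\HeytFrm^\dagger$, and that the dual equivalence between $\Esa^-$ and $\HeytFrm^-$ restricts to a dual equivalence between $\Esa^\dagger$ and $\HeytFrm^\dagger$. This is exactly the statement of \cref{thm: dagger case}. I do not anticipate any genuine obstacle here; the only point requiring minimal care is making explicit that the conjunction of the morphism-level biconditionals from the two corollaries is precisely the morphism-level biconditional for the intersected categories, i.e. that "$f$ is an $\Esa^\dagger$-morphism" unwinds to "$f$ is both an $\Esa^+$- and an $\Esa^*$-morphism" and similarly on the algebraic and frame sides — which is immediate from \cref{rem: dagger}, \cref{def: dagger} for Esakia spaces, and the definition of $\HeytFrm^\dagger$.
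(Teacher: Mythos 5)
Your proposal is correct and follows exactly the paper's route: the paper proves this corollary simply by ``putting \cref{cor: + case,cor: * case} together,'' which is precisely your conjunction of the morphism-level biconditionals for the intersected wide subcategories. The only blemish is the citation of a nonexistent label (\texttt{def: dagger}) for the definition of $\Esa^\dagger$, which is given in an unlabeled definition in the paper; the mathematics is fine.
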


We thus obtain the following diagram of equivalences and dual equivalences involving the various categories of Heyting algebras, Heyting frames, and Esakia spaces considered in this section. Unlabeled arrows represent equivalences and those labeled with $d$ represent dual equivalences.

\[
\begin{tikzcd}[column sep = 1.75pc, row sep = .3pc]
&&&&&& \Esa^- \arrow[rrdddd, dash] && \\
&&&& \HeytFrm^- \arrow[ddddrr, dash] \arrow[urr, leftrightarrow, "d"] &&&& \\
&& \HA^-  \arrow[ddddrr, dash] \arrow[urr, leftrightarrow] &&&&&& \\
&&&& \Esa  \arrow[uuurr, dash] &&&& \\
&& \HeytFrm \arrow[urr, leftrightarrow, "d"] \arrow[uuurr, dash]  &&&&&& \Esa^* \arrow[lldddd, dash] \\
\HA  \arrow[urr, leftrightarrow] \arrow[uuurr, dash] &&&& \Esa^+ \arrow[uu, dash] \arrow[dddrr, dash] && \HeytFrm^* \arrow[urr, leftrightarrow, "d"] \arrow[ddddll, dash, crossing over] \arrow[from=lluuuu, crossing over] && \\
&& \HeytFrm^+ \arrow[dddrr, dash] \arrow[urr, leftrightarrow, "d"] \arrow[uu, dash] && \HA^* \arrow[urr, leftrightarrow, crossing over] \arrow[lldddd, dash, crossing over]  \arrow[from=uuuull, dash, crossing over] &&&& \\
\HA^+ \arrow[urr, leftrightarrow] \arrow[uu, dash] &&&&&&&& \\
&&&&&& \Esa^\dagger && \\
&&&& \HeytFrm^\dagger  \arrow[urr, leftrightarrow, "d"]  &&&& \\
&& \HA^\dagger  \arrow[urr, leftrightarrow] \arrow[lluuu, dash] &&&&&&
\end{tikzcd}
\]

\section{Brouwerian frames} \label{sec: Brouwerian frames}

In this final section we show how to generalize the results of the previous section to Brouwerian semilattices and Brouwerian algebras. We recall that a {\em meet-semilattice} is a poset $A$ in which all finite meets exist. In particular, $A$ has a top, but $A$ may not have a bottom. A meet-semilattice is {\em distributive} if $a\wedge b\le c$ implies that there exist $a'\ge a$ and $b'\ge b$ such that $a'\wedge b'=c$. A \emph{meet-semilattice homomorphism} is a map between meet-semilattices that preserves finite meets (including top). Let $\DMSLat$ be the category of distributive meet-semilattices and meet-semilattice homomorphisms.

\begin{definition}
Let $A$ be a meet-semilattice. 
\begin{enumerate}
\item We call $A$ a \emph{Brouwerian semilattice} or an \emph{implicative semilattice} if it has an implication operation $\to$ satisfying $x \le a \to b$ iff $a \wedge x \le b$. Let $\BrwMS$ be the category of Brouwerian semilattices and maps which preserve finite meets and implication.
\item We call a Brouwerian semilattice $A$ a {\em Brouwerian algebra} if in addition $A$ is a lattice. Let $\BrwA$ be the category of Brouwerian algebras and Brouwerian semilattice homomorphisms that in addition preserve $\vee$.
\end{enumerate}
\end{definition}

Note that $\BrwA$ is a non-full subcategory of $\BrwMS$. Also, each Brouwerian semilattice is a distributive meet-semilattice. Thus, $\BrwMS$ is a non-full subcategory of $\DMSLat$. 

We first generalize Esakia duality to Brouwerian algebras. For this we need to work with pointed Esakia spaces.

\begin{definition}
A {\em pointed Esakia space} is a pair $(X,m)$ where $X$ is an Esakia space and $m$ is the unique maximum of $X$. Let $\PEsa$ be the category of pointed Esakia spaces and Esakia morphisms.
\end{definition}

We note that if $(X,m)$ and $(Y,n)$ are pointed Esakia spaces and $f:X\to Y$ is an Esakia morphism, then $f(m)=n$. Esakia duality generalizes to Brouwerian algebras as follows:

\begin{theorem} \cite[Thm.~3.2]{BMR17} \label{thm: duality for BrwA}
$\BrwA$ is dually equivalent to $\PEsa$.
\end{theorem}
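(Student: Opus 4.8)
The plan is to establish \cref{thm: duality for BrwA} by a reduction to ordinary Esakia duality (\cref{thm: Esakia}), following the familiar strategy of adjoining a bottom element on the algebraic side and passing to a subspace on the topological side. The key observation is that a Brouwerian algebra $A$ becomes a Heyting algebra $A_\bot$ upon freely adjoining a new bottom, and conversely a pointed Esakia space is precisely what one obtains from an Esakia space by removing a point, namely the bottom of the dual Heyting algebra, which in the space corresponds to the ``extra'' prime filter. So first I would make this correspondence functorial on objects: given $(X,m)\in\PEsa$, define a Heyting algebra by $\ClopUp(X)$ itself (note every clopen upset already contains $m$, so this is automatically a Heyting algebra with $\varnothing$ as bottom), but the Brouwerian algebra we want is the sublattice of clopen upsets $U$ together with $\varnothing$ — actually the cleaner route is: the functor $\PEsa \to \BrwA$ sends $(X,m)$ to $\ClopUp(X)\setminus\{\varnothing\}$ reorganized as a Brouwerian algebra, or equivalently to the Brouwerian algebra of clopen upsets of $X\setminus\{m\}$ if $m$ is isolated — but $m$ need not be isolated. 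The correct and robust construction is to send $(X,m)$ to $\{U\in\ClopUp(X) : U \text{ is a clopen upset}\}$ viewed as a Brouwerian algebra by forgetting that it happens to have a bottom, which is fine because Brouwerian algebras are allowed (not required) to lack a bottom; the point element $m$ then records which prime filters of this Brouwerian algebra are ``improper'' in the Heyting sense.

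Concretely, I would proceed as follows. Step one: describe the two contravariant functors. For $A\in\BrwA$, let $A_\bot$ be the Heyting algebra obtained by adjoining a new bottom, observe $A_\bot\in\HA^-$, and set the dual space to be $\pf(A_\bot)$; the prime filter $A$ (the whole Brouwerian algebra, which is a prime filter of $A_\bot$ since $A_\bot\setminus A=\{\bot\}$) is the maximum point, so $(\pf(A_\bot), A)\in\PEsa$. Conversely, for $(X,m)\in\PEsa$, send it to the Brouwerian algebra $\ClopUp(X)$, with implication inherited from \cref{lem: Esakia duality lemma(1)}; the presence of $m$ guarantees no clopen upset issues but is not strictly needed for the algebra structure — it is needed so that the construction inverts the first one. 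Step two: check these are well-defined on morphisms. An Esakia morphism $f:(X,m)\to(Y,n)$ satisfies $f(m)=n$ (noted in the excerpt), so $f^{-1}:\ClopUp(Y)\to\ClopUp(X)$ is a Heyting homomorphism by \cref{lem: Esakia duality lemma(2)}, and since it automatically preserves the top and the (possibly absent) structure, it is a $\BrwA$-morphism; conversely a $\BrwA$-morphism $h:A\to B$ extends uniquely to a Heyting homomorphism $h_\bot:A_\bot\to B_\bot$ (send the new bottom to the new bottom), inducing an Esakia morphism $\pf(B_\bot)\to\pf(A_\bot)$ that preserves the maximum. Step three: verify the two natural isomorphisms $A\cong\ClopUp(\pf(A_\bot))\setminus\{\varnothing\}$ reorganized, and $(X,m)\cong\pf(\ClopUp(X)_\bot)$, by invoking ordinary Esakia duality applied to $A_\bot$ and to $\ClopUp(X)$ together with the bookkeeping that removing/adding the bottom matches removing/adding the maximum point.

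The main obstacle I anticipate is the careful handling of the ``adjoin a bottom'' functor and showing it is faithful and full onto the non-full subcategory of $\HA^-$ consisting of Heyting algebras of the form $A_\bot$ with maps that reflect the new bottom — equivalently, showing that a Heyting homomorphism $g:A_\bot\to B_\bot$ restricts to a $\BrwA$-morphism $A\to B$ only if $g(\bot)=\bot$, which fails in general (a Heyting homomorphism can send the bottom to a non-bottom element only if it is not bottom-preserving — but Heyting homomorphisms by definition preserve $0$, so $g(\bot_{A_\bot})=\bot_{B_\bot}$ automatically, hence $g$ maps $A=A_\bot\setminus\{\bot\}$ into $B_\bot$, and one must check the image lands in $B$, i.e. $g(a)\neq\bot_{B_\bot}$ for $a\in A$). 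This last point is exactly where the maximum point $n$ of the dual space does the work: $g(a)=\bot$ would force, dually, that $f(n)$ lies outside every clopen upset, contradicting that $f$ preserves the maximum. Spelling out this equivalence of conditions — ``$h$ reflects the bottom'' on the algebraic side versus ``$f$ preserves the maximum'' on the spatial side — via \cref{lem: Esakia duality lemma(2)} is the crux; once it is in place, \cref{rem: category theory} lets us conclude by restricting the equivalences/dualities already established, since Conditions (\ref{rem1})--(\ref{rem3}) become routine given that isomorphisms in all these categories are poset isomorphisms (respectively order-homeomorphisms).
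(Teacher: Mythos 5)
First, note that the paper does not prove this theorem: it is quoted from \cite[Thm.~3.2]{BMR17}, and the paper only records, in the remark immediately following it, which functors realize the duality --- namely, a pointed Esakia space $(X,m)$ is sent to the Brouwerian algebra of \emph{nonempty} clopen upsets of $X$, and a Brouwerian algebra $A$ is sent to the prime filters of $A$ together with $A$ itself as the added maximum. Your overall strategy (reduce to \cref{thm: Esakia} by freely adjoining a bottom on the algebra side) is a legitimate and standard way to prove the result and is compatible with those functors. But your proposal contains a genuine error precisely where you make a choice: you first write down the correct functor $\ClopUp(X)\setminus\{\varnothing\}$, then talk yourself out of it and declare the ``correct and robust construction'' to be all of $\ClopUp(X)$ regarded as a Brouwerian algebra. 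With that choice the duality fails: the dual pointed Esakia space of the Brouwerian algebra $\ClopUp(X)$ is $\pf(\ClopUp(X)_\bot)$, and $\ClopUp(X)_\bot$ has the prime filter consisting of all elements except the new bottom, which strictly contains the point-filter of $m$; so $\pf(\ClopUp(X)_\bot)$ is $X$ with an extra point glued on top, and your claimed unit isomorphism $(X,m)\cong\pf(\ClopUp(X)_\bot)$ in Step three is false. The worry that drove you away from $\ClopUp(X)\setminus\{\varnothing\}$ (whether $m$ is isolated) is a red herring: since $m$ is the unique maximum, a clopen upset is nonempty iff it contains $m$, so $\ClopUp(X)\setminus\{\varnothing\}$ is closed under $\cap$, $\cup$, and the implication of \cref{lem: Esakia duality lemma(1)}, and moreover $(\ClopUp(X)\setminus\{\varnothing\})_\bot\cong\ClopUp(X)$ (any two nonempty clopen upsets meet, and $\down U=X$ for nonempty $U$, so $U\to\varnothing=\varnothing$), whence $\pf\bigl((\ClopUp(X)\setminus\{\varnothing\})_\bot\bigr)\cong X$ with maximum $m$, as required. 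Your Step three also silently uses the $\setminus\{\varnothing\}$ version for the counit, so the proposal is internally inconsistent; the proof is repairable, but only by discarding the functor you designated as the correct one.

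On the point you single out as the crux --- whether a Heyting homomorphism $g:A_\bot\to B_\bot$ maps $A$ into $B$ --- the situation is better than you suggest: this is automatic, with a purely algebraic one-line proof. For every $a\in A$ one has $\neg a=a\to\bot=\bot$ in $A_\bot$ (since $a\wedge c\in A$ for $c\in A$), so $g(a)=\bot$ would give $1=\neg g(a)=g(\neg a)=g(\bot)=\bot$, a contradiction. This is the algebraic shadow of the observation, stated in the paper right after the definition of $\PEsa$, that any Esakia morphism between pointed Esakia spaces automatically sends maximum to maximum; in particular $\PEsa$ is a full subcategory of $\Esa$ and $\BrwA$ corresponds to a full subcategory of $\HA$ on the algebras of the form $A_\bot$, so the fullness/faithfulness bookkeeping you defer to \cref{rem: category theory} does go through once the object-level functor is fixed as above.
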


\begin{remark}
The contravariant functors establishing Esakia duality need slight modification. Namely, with a pointed Esakia space $(X,m)$ we associate the Brouwerian algebra of {\em nonempty} clopen upsets of $X$ (equivalently, those clopen upsets of $X$ that contain $m$). Also, with a Brouwerian algebra $A$ we associate the pointed Esakia space $(X,m)$ where $X$ is the set of prime filters of $A$ together with $A$, and $A$ serves as the unique maximum $m$ of $X$. The action of the functors on morphisms is the same as in Esakia duality.
\end{remark}

We next generalize Esakia duality to Brouwerian semilattices. A {\em pointed Priestley space} is a pair $(X,m)$ where $X$ is a Priestley space and $m$ is the unique maximum of $X$. Let $X_0$ be a fixed subset of $X\setminus\{m\}$. We call a subset $U$ of $X$ \emph{admissible} if $X_0\setminus U$ is cofinal in $X\setminus U$ (that is, $X \setminus U \subseteq \down (X_0 \setminus U)$). Let $\A(X)$ be the set of admissible clopen upsets of $X$. For $x \in X$ set $\I_x = \{ U \in \A(X) : x \notin U\}$. 

\begin{definition}\label{def: PGEsa}
\hfill
\begin{enumerate}
\item A {\em pointed generalized Priestley space} is a triple $(X,X_0,m)$ such that
\begin{enumerate}
\item $(X,m)$ is a pointed Priestley space;
\item $X_0$ is a cofinal dense subset of $X \setminus \{ m \}$;
\item $x \in X_0$ iff $\I_x$ is nonempty and directed;
\item $x \le y$ iff $(\forall U \in \A(X))(x \in U \Rightarrow y \in U)$.
\end{enumerate}
\item A \emph{pointed generalized Esakia space} is a pointed generalized Priestley space in which $U, V \in \A(X)$ implies that $\down (U \setminus V)$ is clopen.
\end{enumerate}
\end{definition}

\begin{remark}
Let $(X,X_0,m)$ be a pointed generalized Esakia space. In analogy with \cite[Def.~3.4]{BJ13}, we call a subset $E$ of $X$
\emph{Esakia clopen}, or simply \emph{E-clopen}, if $E=\bigcup_{i=1}^n (U_i \setminus V_i)$ with each $U_i, V_i \in \A(X)$. It is easy to see from \cref{def: PGEsa}(2) that 
$\down E$ is clopen for each E-clopen $E$. However, not every clopen of $X$ is E-clopen. Thus, a pointed generalized Esakia space may not be an Esakia space (see \cite[Exmp.~3.9]{BJ13}).
\end{remark}

Let $R \subseteq X \times Y$ be a relation between sets $X$ and $Y$. For $U \subseteq Y$ we follow the standard notation in modal logic and write $\Box_RU = \{ x\in X : R[x] \subseteq U\}$.

\begin{definition} \label{def: morphism}
\hfill
\begin{enumerate}
\item Let $X, Y$ be pointed generalized Priestley spaces. A {\em generalized Priestley morphism} is a relation $R \subseteq X \times Y$ satisfying
\begin{enumerate}
\item If $x \nr{R} y$, then there is $U \in \A(Y)$ with $R[x] \subseteq U$ and $y \notin U$;
\item If $U \in \A(Y)$, then $\Box_RU \in \A(X)$.
\end{enumerate}
\item Let $\PGPries$ be the category of pointed generalized Priestley spaces and generalized Priestley morphisms. 
\item If $X, Y$ are pointed generalized Esakia spaces, then a generalized Priestley morphism $R \subseteq X \times Y$ is a \emph{generalized Esakia morphism} provided whenever $x \in X$ and $y \in Y_0$ with $xRy$, there is $z \in X_0$ with $x \le z$ and $R[z] = \up y$.
\item Let $\PGEsa$ be the category of pointed generalized Esakia spaces and generalized Esakia morphisms.
\end{enumerate}
\end{definition}

\begin{remark}
Composition in $\PGPries$ and $\PGEsa$ is not usual relation compostion. Rather, for generalized Priestley (resp.~Esakia) morphisms $R \subseteq X \times Y$ and $S \subseteq Y \times Z$, the composition $S \ast R \subseteq X \times Z$ is defined by 
\[
x (S \ast R) z \Longleftrightarrow (\forall U \in \A(Z))(x\in\Box_R\Box_S U \Longrightarrow z\in U) 
\]
for all $x\in X$ and $z\in Z$ (see \cite[p.~106]{BJ11}).
\end{remark}

Clearly $\PGEsa$ is a non-full subcategory of $\PGPries$, and Priestley and Esakia dualities are generalized to distributive and Brouwerian semilattices as follows: 

\begin{theorem} \label{thm: duality for BrwMS}
\hfill
\begin{enumerate}[ref = \thetheorem(\arabic*), label = $(\arabic*)$] 
\item \cite[Thm.~6.9]{BJ11} $\DMSLat$ is dually equivalent to $\PGPries$. \label[theorem]{thm: duality for BrwMS(1)}
\item \cite[Thm.~4.4]{BJ13} $\BrwMS$ is dually equivalent to $\PGEsa$. \label[theorem]{thm: duality for BrwMS(2)}
\end{enumerate}
\end{theorem}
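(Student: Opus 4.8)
The plan is to recall the contravariant functors, since these dualities are due to Bezhanishvili and Jansana (\cite{BJ11} and \cite{BJ13}); I outline the construction and indicate where the work lies. For (1), the space-to-algebra functor sends a pointed generalized Priestley space $(X,X_0,m)$ to the distributive meet-semilattice $\A(X)$ of admissible clopen upsets --- that $\A(X)$ is closed under finite meets and satisfies the meet-semilattice distributivity law is precisely where the axioms of \cref{def: PGEsa} are used --- and sends a generalized Priestley morphism $R\subseteq X\times Y$ to $\Box_R\colon\A(Y)\to\A(X)$; clause (b) of \cref{def: morphism} guarantees $\Box_RU\in\A(X)$, and functoriality with respect to the composition $\ast$ is a direct computation from the definition of $\ast$. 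In the reverse direction, $A\in\DMSLat$ is sent to the space $X_A$ of optimal filters of $A$, with maximum the improper filter $A$ and with $X_0$ the set of those filters in $X_A$ whose complement is a directed downset of $A$; the topology has basis $\{\varphi(a)\setminus\varphi(b):a,b\in A\}$ for $\varphi(a)=\{F\in X_A:a\in F\}$, and the order is inclusion. A $\DMSLat$-morphism $h\colon A\to B$ is sent to the relation $R_h\subseteq X_B\times X_A$ with $F\,R_h\,G$ iff $h^{-1}[F]\subseteq G$. The crux is the representation theorem: $\varphi$ is an isomorphism $A\cong\A(X_A)$ and $x\mapsto\{U\in\A(X):x\in U\}$ is an isomorphism $(X,X_0,m)\cong X_{\A(X)}$ in $\PGPries$, both resting on separation lemmas for optimal (and prime) filters of $A$, e.g.\ that for $a\not\le b$ in $A$ there is an optimal filter containing $a$ but not $b$. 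One then checks that $R_h$ is a generalized Priestley morphism and that the assignments on morphisms are mutually inverse. Alternatively, (1) can be reduced to \cref{thm: Priestley}: $A$ embeds into its bounded distributive envelope $D(A)$, and the Priestley dual of $D(A)$, with maximum corresponding to the freely added bottom and distinguished subset $X_0$ determined by the trace on $A$, is the required pointed generalized Priestley space.

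For (2), one first shows that $A\in\DMSLat$ is a Brouwerian semilattice iff its dual $(X_A,X_0,m)$ is a pointed generalized Esakia space, i.e.\ $\down(U\setminus V)$ is clopen for all $U,V\in\A(X_A)$. This is the semilattice analogue of \cref{lem: Esakia duality lemma(1)}: when $a\to b$ exists in $A$, a short computation gives $\varphi(a\to b)=X_A\setminus\down(\varphi(a)\setminus\varphi(b))$, so $\down(\varphi(a)\setminus\varphi(b))$ must be clopen; conversely, if every $\down(U\setminus V)$ is clopen, then $X_A\setminus\down(\varphi(a)\setminus\varphi(b))$ is an admissible clopen upset realizing $a\to b$. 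On morphisms, one checks that for a generalized Priestley morphism $R$ between pointed generalized Esakia spaces, $\Box_R$ preserves implication iff $R$ satisfies the extra clause of \cref{def: morphism} defining generalized Esakia morphisms (for $xRy$ with $y\in Y_0$ there is $z\in X_0$ with $x\le z$ and $R[z]=\up y$), the relational analogue of the p-morphism condition. Restricting the functors of (1) to these subcategories --- with the bookkeeping of \cref{rem: category theory} --- then yields the dual equivalence of $\BrwMS$ and $\PGEsa$.

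The main obstacle is the representation in (1): pinning down the correct class of points --- optimal, not merely prime, filters, since without joins the prime filters are too few to separate elements --- and proving the separation lemma that supplies enough of them, together with verifying that the non-standard composition $\ast$ of \cref{def: morphism} is precisely the one making $R\mapsto\Box_R$ a functor (ordinary relation composition does not work). Once (1) is in place, (2) follows the familiar Esakia pattern, the genuinely new feature being that $\down$ must be controlled on E-clopen sets rather than on all clopens, since a pointed generalized Esakia space need not be an Esakia space.
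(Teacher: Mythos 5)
The paper gives no proof of this theorem: it is imported from \cite{BJ11} and \cite{BJ13}, with the functors recalled only afterwards in \cref{rem: A and X}. Your outline reconstructs the imported proof along the same lines, and the architecture is the right one: $\A(X)$ and $\Box_R$ in one direction, optimal filters with the primes as $X_0$ and the improper filter as the maximum in the other, separation lemmas for optimal filters as the crux of (1), and the reduction of (2) to the identity $\varphi(a\to b)=X_A\setminus\down(\varphi(a)\setminus\varphi(b))$ together with the correspondence between the extra clause on generalized Esakia morphisms and preservation of $\to$ by $\Box_R$. Two points, however, need correction. First, the sets $\varphi(a)\setminus\varphi(b)=\varphi(a)\cap\varphi(b)^c$ do \emph{not} form a basis for the topology on $X_A$: since $A$ has no joins, $\varphi(b_1)^c\cap\varphi(b_2)^c$ cannot in general be rewritten as a single $\varphi(b)^c$, so your family is not closed under finite intersections and is only a subbasis. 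The correct basis is $\{\varphi(a)\cap\varphi(b_1)^c\cap\cdots\cap\varphi(b_n)^c : a,b_1,\dots,b_n\in A\}$; the paper flags exactly this discrepancy with the distributive-lattice case in the footnote to \cref{rem: A and X}, and any argument that identifies ``basic opens'' with sets of the form $\varphi(a)\setminus\varphi(b)$ (for instance when verifying the Priestley separation axiom or computing $\A(X_A)$) would be carried out on the wrong family.

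Second, the theorems you cite from \cite{BJ11,BJ13} are proved for \emph{bounded} distributive and Brouwerian semilattices, where no distinguished point is needed; the statement here is the pointed, bottom-free version. Passing from one to the other --- adjoining the improper filter $A$ as the maximum $m$, and reworking admissibility so that $X_0$ is cofinal and dense in $X\setminus\{m\}$ rather than in $X$ --- is precisely the adaptation the paper attributes to \cite{BCM22a} in the remark following the theorem. Your proposal asserts the pointed statement but argues essentially the bounded one (the aside about the ``freely added bottom'' in the distributive envelope gestures at the issue without carrying it out). This is the one piece of content in the statement that is not literally contained in the cited theorems, so it is the part a complete proof would actually have to supply.
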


\begin{remark}
\cref{thm: duality for BrwMS} is proved in \cite{BJ11,BJ13} for distributive and Brouwerian semilattices that are bounded. Because of this restriction, there is no need to work with pointed spaces. The need for pointed spaces arises when the bottom is not present. This is discussed in detail in \cite{BCM22a} for distributive meet-semilattices, and a similar approach also works for Brouwerian semilattices.
\end{remark}

\begin{remark} \label{rem: A and X}
The contravariant functor $\A : \PGPries \to \DMSLat$ sends $X \in \GPries$ to $\A(X)$ and a $\PGPries$-morphism $R \subseteq X \times Y$ to $\Box_R : \A(Y) \to \A(X)$. To define the contravariant functor $\X : \DMSLat \to \PGPries$, let $A\in\DMSLat$. We recall that a filter $F$ of $A$ is \emph{optimal} if from $a_1, \dots, a_n \notin F$ and $\bigcap_{i=1}^n \up a_i \subseteq \up c$ it follows that $c \notin F$. Equivalently, if $D$ is the distributive envelope of $A$ (see, e.g., \cite[Sec.~3]{BJ11}), then $F$ is optimal iff $F = P \cap A$ for some prime filter $P$ of $D$ (\cite[Prop.~4.8]{BJ11}).  Let $\Opt(A)$ be the set of optimal filters of $A$ and ${\sf Pr}(A)$ the set of prime filters of $A$. Then ${\sf Pr}(A) \subseteq \Opt(A)$. We set $X_A = \Opt(A) \cup \{A\}$, order it by inclusion, and topologize it by letting 
\[
\{ \varphi(a) : a \in A\} \cup \{ \varphi(b)^c : b \in A\}
\] 
be a subbasis for the topology, where $\varphi(a) = \{ x \in X_A : a \in x \}$.\footnote{Since $\varphi(a\wedge b)=\varphi(a)\cap\varphi(b)$ for each $a,b\in A$, the subbasis $\{ \varphi(a) : a \in A\} \cup \{ \varphi(b)^c : b \in A\}$ generates the basis 
$
\{ \varphi(a) \cap \varphi(b_1)^c \cap \cdots \cap \varphi(b_n)^c : a, b_1, \dots, b_n \in A\}.
$ 
However, unlike in the case of distributive lattices, we cannot replace the finite intersections $\varphi(b_1)^c \cap \cdots \cap \varphi(b_n)^c$ with one $\varphi(b)^c$ because $A$ is not a lattice.} Then $\X(A) := (X_A, {\sf Pr}(A), A)$ is a pointed generalized Priestley space. If $h : A \to B$ is a $\DMSLat$-morphism, then $\X(h) = R_h$, where $x R_h y$ if $h^{-1}(x) \subseteq y$. The functors $\A$ and $\X$ yield the dual equivalence between $\PGPries$ and $\DMSLat$ of \cref{thm: duality for BrwMS(1)}, which further restricts to the dual equivalence of \cref{thm: duality for BrwMS(2)}.
\end{remark}

We next connect distributive and implicative semilattices with algebraic frames. Let $A$ be a distributive meet-semilattice. Since $A$ is not a lattice, instead of working with ideals of $A$, it is more convenient to work with filters of $A$. Let $\F(A)$ be the poset of filters of $A$ ordered by inclusion. Then $\F(A)$ is an algebraic frame whose compact elements are the principal filters of $A$. But since $a \le b$ iff $\up a \supseteq \up b$, we have that $A$ is isomorphic to the order-dual $K(\F(A))^d$ of $K(\F(A))$.

If $h : A \to B$ is a meet-semilattice homomorphism, we can define $\F(h) : \F(A) \to \F(B)$ by $\F(h)(F) = \up h[F]$. Note that in general $\F(h)$ preserves arbitrary joins, but may not be a frame homomorphism. In order for $\F(h)$ to be a frame homomorphism, we need two additional conditions on $h$. We recall that a \emph{prime filter} of a meet-semilattice $A$ is a meet-prime element of $\mathcal{F}(A)$.

\begin{definition} \cite[Def.~5.15]{BCM22a} \label{def: sup-homs}
Let $\DMSLatP$ be the category of distributive meet-semilattices and meet-semilattice homomorphisms $h : A \to B$ that in addition satisfy:
\begin{enumerate}
\item $h$ preserves all existing finite joins.
\item $h^{-1}(P)$ is a prime filter of $A$ for each prime filter $P$ of $B$.
\end{enumerate}
\end{definition}

Meet-semilattice homomorphisms satisfying the above two conditions correspond to special functions between the corresponding generalized Priestley spaces:

\begin{definition} \label{def: morphismP}
Let $\PGPriesP$ be the category of pointed generalized Priestley spaces and order-preserving maps $f : X \to Y$ satisfying 
\begin{enumerate}
\item $U \in \A(Y) \Longrightarrow f^{-1}(U) \in \A(X)$.
\item $f[X_0] \subseteq Y_0$. 
\end{enumerate}
\end{definition}

\begin{theorem} \cite[Thm.~5.18]{BCM22a} \label{thm: Pont}
$\DMSLatP$ is equivalent to $\AlgFrm$, dually equivalent to $\PGPriesP$, and the following diagram commutes up to natural isomorphism. 
\[
\begin{tikzcd}
\DMSLatP \arrow[rr, shift left = .5ex, "\F"] \arrow[dr,  shift left=.5ex, "\X"] && \AlgFrm \arrow[ll, shift left = .5ex, "\K"] \arrow[dl, shift left = .5ex, "\Y"] \\
& \PGPriesP \arrow[ul, shift left = .5ex, "\A"] \arrow[ur, shift left = .5ex, "\V"] &
\end{tikzcd}
\]
\end{theorem}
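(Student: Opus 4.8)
The plan is to invoke the general framework of \cref{thm: Pont} (which is cited from \cite{BCM22a}) and verify that the functors $\F$, $\K$, $\X$, $\Y$, $\A$, $\V$ do what is claimed. Since this is quoted as an external theorem, the ``proof'' here amounts to recalling the constructions and checking the three things: that $\F$ and $\K$ are quasi-inverse equivalences between $\DMSLatP$ and $\AlgFrm$, that $\X$ and $\Y$ are quasi-inverse dual equivalences between $\DMSLatP$ and $\PGPriesP$, and that the triangle commutes up to natural isomorphism. First I would record the object-level correspondences: for $A \in \DMSLatP$, the poset $\F(A)$ of filters is an algebraic frame with $K(\F(A))$ the principal filters, so $A \cong K(\F(A))^d$; conversely, for an algebraic frame $L$, the dual $K(L)^d$ is a distributive meet-semilattice, and $\F(K(L)^d) \cong L$ via sending a filter of $K(L)^d$ to the join in $L$ of the corresponding compact elements (using algebraicity). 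These two assignments are mutually inverse up to natural isomorphism on objects.

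Next I would check the morphism level. Given $h : A \to B$ in $\DMSLatP$, one defines $\F(h)(F) = \up h[F]$; the content is that conditions (1) and (2) of \cref{def: sup-homs} are exactly what is needed for $\F(h)$ to preserve finite meets (hence be a genuine frame homomorphism, not merely sup-preserving) and to send compact elements to compact elements, so $\F(h)$ is an $\AlgFrm$-morphism. Conversely, given an $\AlgFrm$-morphism $\alpha : L \to M$, its restriction $\K(\alpha)$ to compact elements, read in the dual order, is a meet-semilattice homomorphism satisfying the two extra conditions — preservation of existing finite joins is automatic since $\alpha$ preserves all joins, and the prime-filter pullback condition follows because $\alpha^{-1}$ of a meet-prime element of $M$ is meet-prime in $L$ (here one uses that $\alpha$ preserves finite meets and arbitrary joins). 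For the dual side, one uses the functors $\A$ and $\X$ from \cref{rem: A and X}: $\X(A) = (X_A, {\sf Pr}(A), A)$ with $X_A = \Opt(A) \cup \{A\}$, and $\X(h) = R_h$ where $x R_h y$ iff $h^{-1}(x) \subseteq y$; the point is that for $h \in \DMSLatP$ the relation $R_h$ is in fact (the graph of) an order-preserving \emph{function} $f : X_B \to X_A$ satisfying the two conditions of \cref{def: morphismP}, because condition (2) of \cref{def: sup-homs} guarantees $h^{-1}$ takes optimal filters to optimal filters in a functional way and takes ${\sf Pr}(B)$ into ${\sf Pr}(A)$. One then appeals to \cref{thm: duality for BrwMS(1)} to get the dual equivalence between $\DMSLat$ and $\PGPries$, and restricts it along the non-full subcategory inclusions $\DMSLatP \hookrightarrow \DMSLat$ and $\PGPriesP \hookrightarrow \PGPries$, using the bookkeeping of \cref{rem: category theory}: condition (1) is vacuous on objects, condition (2) is precisely the claim that $h \in \DMSLatP$ iff $\X(h)$ lies in $\PGPriesP$, and condition (3) holds since isomorphisms in all these categories are poset isomorphisms (and homeomorphisms on the space side).

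Finally I would verify commutativity of the triangle up to natural isomorphism. The composite $\A \circ \X$ versus the identity on $\DMSLatP$ is exactly \cref{thm: duality for BrwMS(1)} restricted; the composite $\K \circ \F$ versus the identity is the object isomorphism $A \cong K(\F(A))^d$ checked to be natural in $h$. It remains to identify $\Y \circ \F$ with $\X$ and $\V \circ \A$ with... wait, more precisely one checks $\X \cong \Y \circ \F$ and $\V \cong \F \circ \A$ (equivalently $\A \cong \K \circ \V$): the point of departure is that the points of the algebraic frame $\F(A)$, taken in the patch topology, are in natural bijection with the optimal filters of $A$ together with the top, since a completely prime filter of $\F(A)$ corresponds to a prime filter of the meet-semilattice $K(\F(A))^d \cong A$ in the appropriate sense, and optimal filters of $A$ are exactly the $P \cap A$ for prime filters $P$ of the distributive envelope, matching the known description of $\Y$. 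The main obstacle, as I see it, is the morphism-level naturality on the space side: relation-composition in $\PGPries$ is \emph{not} ordinary relation composition (see the remark preceding \cref{thm: duality for BrwMS}), so checking that $\X$ is functorial — and that under the restriction to $\DMSLatP$ the relational morphisms $R_h$ become honest functions composing in the usual way — requires care. Everything else is routine once the dictionary between principal filters of $A$, compact elements of $\F(A)$, and the dual order is set up correctly.
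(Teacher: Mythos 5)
The paper does not prove this theorem: it is quoted from \cite[Thm.~5.18]{BCM22a}, and the surrounding text (\cref{rem: A and X,rem: description of functors}) only recalls the constructions of the six functors. So there is no in-paper argument to compare yours against; I can only assess the sketch on its own terms. Its overall architecture --- the object correspondences $A \cong K(\F(A))^d$ and $L \cong \F(K(L)^d)$, the morphism-level matching of \cref{def: sup-homs}(1)--(2) with frame homomorphisms on one side and with \cref{def: morphismP} on the other, and then commutativity of the triangle --- is the right shape and consistent with what the paper recalls.

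Two points need repair, one of them substantive. In identifying $\X$ with $\Y \circ \F$ you assert that ``the points of the algebraic frame $\F(A)$, taken in the patch topology, are in natural bijection with the optimal filters of $A$.'' That is the coherent-frame picture from \cref{sec: coherent frames}, and it fails here: points (completely prime filters) of $\F(A)$ correspond to \emph{prime} elements, hence to \emph{prime} filters of $A$, which form only the distinguished subset ${\sf Pr}(A)$ of $X_A$, and in general the inclusion ${\sf Pr}(A) \subseteq \Opt(A)$ is proper. The space $Y_L$ of \cref{rem: description of functors} is built from the \emph{pseudoprime} elements of $L$ (equivalently, the pseudopoints), and it is these that biject with the optimal filters of $K(L)^d$; the primes only pick out the subset $P(L)$ corresponding to ${\sf Pr}(A)$. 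Since $X_A = \Opt(A) \cup \{A\}$, your proposed bijection identifies the wrong two sets, and the commutativity of the triangle on the space side would not go through as written --- this is precisely where the algebraic-frame setting departs from the coherent one. The second point is a dualization slip: for $\alpha : L \to M$ in $\AlgFrm$, preservation of existing finite joins in $K(L)^d$ (condition (1) of \cref{def: sup-homs}) means preservation of existing finite \emph{meets} in $K(L)$; an existing meet in $K(L)$ coincides with the meet in $L$ (which is then compact), so the relevant hypotheses are that $\alpha$ preserves finite meets and compact elements, not ``all joins'' as you write. Relatedly, $\F(h)(\up a) = \up h(a)$ is principal, so preservation of compacts by $\F(h)$ is automatic; conditions (1)--(2) are needed for $\F(h)$ to preserve finite meets, as you correctly say elsewhere. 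You are right to flag the nonstandard composition $\ast$ in $\PGPries$ as the delicate point in viewing $\PGPriesP$-morphisms as relations.
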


\begin{remark} \label{rem: description of functors}
We briefly describe the functors in the above diagram. 
\begin{itemize}[leftmargin=*]
\item The contravariant functors $\X$ and $\A$ are described in \cref{rem: A and X}.
\item The covariant functor $\F:\DMSLatP\to\AlgFrm$ sends $A \in \DMSLatP$ to the frame $\F(A)$ of filters of $A$ and a $\DMSLatP$-morphism $h : A \to B$ to $\F(h) : \F(A) \to \F(B)$ given by $\F(h)(F) = \up h[F]$. The covariant functor $\K$ sends $L \in \AlgFrm$ to $K(L)^d$ and an $\AlgFrm$-morphism $\alpha : L \to M$ to its restriction to $K(L)^d$, where $K(L)^d$ is the order-dual of $K(L)$. The functors $\F$ and $\K$ yield an equivalence of $\DMSLatP$ and $\AlgFrm$.

\item To describe the contravariant functor $\V:\PGPriesP \to \AlgFrm$, for $X \in \PGPries$ let $\V(X)$ be the set of admissible closed upsets of $X$, ordered by reverse inclusion. Then $\V$ sends $X\in\PGPriesP$ to $\V(X)$ and a $\PGPriesP$-morphism $f : X \to Y$ to $f^{-1} : \V(Y)\to\V(X)$. To describe the contravariant functor $\Y : \AlgFrm \to \PGPriesP$, we recall the notions of pseudoprime and prime elements. If $L$ is a frame and $1 \ne p \in L$, then $p$ is (\emph{meet}-)\emph{prime} if $a, b \in L$ and $a \wedge b \le p$ imply that $a \le p$ or $b \le p$. Moreover, $p$ is \emph{pseudoprime} if $a_1, \dots, a_n \in L$ with $a_1 \wedge \cdots \wedge a_n \ll p$ imply that $a_i \le p$ for some $i$, where $\ll$ is the way below relation on $L$ (see, e.g., \cite[p.~49]{GHKLMS03}). Let $PP(L)$ and $P(L)$ be the sets of pseudoprime and prime elements of $L$, respectively. We set $Y_L = PP(L) \cup \{1\}$, order it by the restriction of the order on $L$, and topologize it by the subbasis
\[
\{\up k \cap Y_L : k \in K(L)\} \cup \{(\up l)^c \cap Y_L : l \in K(L)\}. 
\]
Then the functor $\Y$ sends $L \in \AlgFrm$ to $\Y(L) := (Y_L, P(L), 1)$. 
To describe the action of $\Y$ on morphisms, let $\alpha : L \to M$ be an $\AlgFrm$-morphism. Then it has the right adjoint $r : M \to L$ given by 
\[
r(b)=\bigvee\{ a\in L : \alpha(a)\le b \},
\]
and $\Y$ sends $\alpha$ to $r$. The functors $\V$ and $\Y$ yield a dual equivalence of $\PGPriesP$ and $\AlgFrm$.
\end{itemize}
\end{remark}

\begin{remark}
It follows from \cref{thm: Pont} that $\X\circ \K$ is naturally isomorphic to $\Y$. This implies that prime elements of $L$ correspond to prime filters of $K(L)^d$ and pseudoprime elements of $L$ to optimal filters of $K(L)^d$. It is well known (see, e.g., \cite[pp.~13-14]{PP12}) that prime elements of $L$ correspond to points of $L$.
On the other hand, pseudoprime elements correspond to what we term ``pseudopoints'' of $L$, which are defined as follows. A nonempty upset $U$ of $L$ is a \emph{pseudopoint} if 
\begin{itemize}
\item $\bigvee S \in U$ implies that $S \cap U \ne \varnothing$;
\item $a_1, \dots, a_n \in U$ and $a_1 \wedge \cdots \wedge a_n \ll b$ imply that $b \in U$.
\end{itemize}
\end{remark}

\begin{definition}
\hfill
\begin{enumerate}
\item Let $\BrwMSP$ be the category of Brouwerian semilattices and $\BrwMS$-morphisms that are also $\DMSLatP$-morphisms.
\item Let $\BrwAP$ be the full subcategory $\BrwMSP$ consisting of Brouwerian algebras.
\end{enumerate}
\end{definition}

We are ready to introduce the key notion of this section, that of a Brouwerian frame. These frames play the same role for Brouwerian semilattices as Heyting frames for Heyting algebras. 

\begin{definition} 
\hfill
\begin{enumerate}
\item We call an algebraic frame $L$ a {\em Brouwerian frame} if $K(L)^d$ 
is a Brouwerian semilattice. 
\item Let $\BrwFrm$ be the category whose objects are Brouwerian frames and whose morphisms are $\AlgFrm$-morphisms $\alpha : L \to M$ such that $\alpha(a \to b) = \alpha(a) \to \alpha(b)$ for each $a, b \in K(L)$, where the two implications are calculated in $K(L)^d$ and $K(M)^d$, respectively.
\end{enumerate}
\end{definition}

\begin{remark}
We don't have an analogue of \cref{def: HeytFrm} because due to turning the order around, implication becomes co-implication, which may not exist in frames.
\end{remark}

\begin{definition}
Let $\PGEsaP$ be the category of pointed generalized Esakia spaces and $\PGPriesP$-morphisms $f : X \to Y$ satisfying: If $x \in X$, $y \in Y_0$, and $f(x) \le y$, then there is $z \in X_0$ with $x \le z$ and $y = f(z)$.
\end{definition}

We show that the functors of \cref{thm: Pont} restrict to yield an equivalence between $\BrwMSP$ and $\BrwFrm$ and a dual equivalence between $\BrwFrm$ and $\PGEsaP$. For this we require the following lemma, which follows from \cite{BJ11,BJ13}. 
Slight care is needed since these papers only consider the bounded case, while we do not assume the existence of a bottom. Nonetheless, the relevant proofs carry over to our more general setting.

\begin{lemma} \label{lem: PGEsa facts}
\hfill
\begin{enumerate}[ref= \thelemma(\arabic*), label = $(\arabic*)$] 
\item Let $X$ be a pointed generalized Priestley space. Then $X$ is a pointed generalized Esakia space iff $\A(X)$ is a Brouwerian semilattice.
\label[lemma]{lem: PGEsa facts(1)}
\item Let $X,Y$ be pointed generalized Esakia spaces and $f:X\to Y$ a $\PGPriesP$-morphism. Then $f$ is a $\PGEsaP$-morphism iff $f^{-1}(U \to V) = f^{-1}(U) \to f^{-1}(V)$ for each $U, V \in \A(Y)$. \label[lemma]{lem: PGEsa facts(2)}
\end{enumerate}
\end{lemma}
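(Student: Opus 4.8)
The plan is to prove the two parts of \cref{lem: PGEsa facts} separately, in each case by transporting the corresponding facts about bounded Brouwerian semilattices and pointed generalized Esakia spaces from \cite{BJ11,BJ13} and checking that the arguments do not use the bottom element.

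For part~(1), I would argue as follows. By \cref{thm: duality for BrwMS(1)}, the functors $\A$ and $\X$ set up a dual equivalence between $\DMSLat$ and $\PGPries$, and under this correspondence $\A(X)$ is exactly the distributive meet-semilattice dual to $X$. So the claim is really that $\A(X)$ is a Brouwerian semilattice iff $X$ satisfies the extra condition that $\down(U\setminus V)$ is clopen for all $U,V\in\A(X)$, which is precisely the definition of a pointed generalized Esakia space. For the right-to-left direction, assuming $\down(U\setminus V)$ is always clopen, I would define, for $U,V\in\A(X)$, the candidate implication $U\to V := X\setminus\down(U\setminus V)$ (this mirrors \cref{lem: Esakia duality lemma(1)}), check that it is an admissible clopen upset using \cref{def: PGEsa}(2)(b), and verify the residuation law $W\le U\to V \iff U\wedge W\le V$ directly in terms of upsets and downsets; here $\wedge$ in $\A(X)$ is intersection and $\le$ is inclusion, so this is the same computation as in the Heyting-algebra case. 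For the left-to-right direction, assuming $\A(X)$ has an implication, I would use the duality to identify $\down(U\setminus V)$ with $X\setminus(U\to V)$, which is clopen because $U\to V\in\A(X)$ is clopen; the only point to check is that the implication in $\A(X)$ really is computed by this set-theoretic formula, which again follows from residuation. This is essentially \cite[Thm.~4.4 and the surrounding lemmas]{BJ13}, with the observation that none of the steps invoke a bottom element of the semilattice.

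For part~(2), I would again proceed via the duality. A $\PGPriesP$-morphism $f:X\to Y$ corresponds under \cref{thm: Pont} and its restriction to a $\DMSLatP$-morphism $f^{-1}:\A(Y)\to\A(X)$, and the question is when this homomorphism preserves implication. Using the formula $U\to V=X\setminus\down(U\setminus V)$ from part~(1), preservation of implication becomes the condition $f^{-1}(\down(U\setminus V))=\down f^{-1}(U\setminus V)$ for all $U,V\in\A(Y)$, i.e.\ $f^{-1}\down E=\down f^{-1}E$ for each E-clopen $E$, exactly in parallel with \cref{lem: when does 1/f preserve implication}. It then remains to show that this closure/downset-commutation condition on E-clopen sets is equivalent to the combinatorial $\PGEsaP$-morphism condition: if $x\in X$, $y\in Y_0$, and $f(x)\le y$, then there is $z\in X_0$ with $x\le z$ and $y=f(z)$. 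I would prove this equivalence by mimicking the corresponding argument in \cite[Sec.~4]{BJ13} (the functional version of the $\PGEsa$-morphism condition of \cref{def: morphism}(3)): for one direction, given the commutation condition, pick $E\in\A(Y)$ separating appropriate filters at $y$ and pull back to produce $z$; for the other, given a $\PGEsaP$-morphism, show $x\in f^{-1}\down E$ implies $x\in\down f^{-1}E$ by producing such a $z$ with $f(z)\in E$. The reverse inclusion $\down f^{-1}E\subseteq f^{-1}\down E$ is automatic from order-preservation of $f$.

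The main obstacle I anticipate is the equivalence in part~(2) between the set-theoretic commutation condition on E-clopen sets and the existence-of-$z$ condition defining $\PGEsaP$-morphisms: this is where the generalized setting genuinely differs from the classical Esakia case, because admissible clopen upsets, optimal filters, and the cofinal dense subset $X_0$ all enter, and one must be careful that the $z$ produced lies in $X_0$ (not merely in $X$) and that the relevant separating admissible clopens exist. Checking that the arguments of \cite{BJ11,BJ13} survive without a bottom element is routine but needs to be stated explicitly, since, e.g., the principal filter $\up 1 = A$ plays the role the bottom ideal would otherwise play, and one should confirm this causes no circularity.
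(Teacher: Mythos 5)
Your proposal is correct and follows essentially the same route as the paper: the paper's proof consists of citing \cite[Prop.~3.7, Props.~4.1, 4.3, Lem.~4.8]{BJ13} and \cite[Thm.~5.13]{BJ11}, together with the preceding remark that ``the relevant proofs carry over'' to the unbounded setting, which is exactly the transport-and-check-no-bottom strategy you describe (you simply unpack what those cited results say, including the formula $U\to V=X\setminus\down(U\setminus V)$ and the equivalence with the existence-of-$z$ condition).
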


\begin{proof}

(1) follows from \cite[Prop.~3.7]{BJ13} and \cite[Thm.~5.13]{BJ11}  
and (2) follows from 
\cite[Props.~4.1, 4.3, Lem.~4.8]{BJ13}.
\end{proof}

\begin{theorem} \label{thm: BrwMS = BrwFrm}
There is an equivalence of categories between $\BrwMSP$ and $\BrwFrm$, a dual equivalence between $\BrwFrm$ and $\PGEsaP$, and the following diagram commutes up to natural isomorphism, where the functors are the restrictions of the corresponding functors of Theorem~\emph{\ref{thm: Pont}}.
\[
\begin{tikzcd}
\BrwMSP \arrow[rr, shift left = .5ex, "\F"] \arrow[dr,  shift left=.5ex, "\X"] && \BrwFrm \arrow[ll, shift left = .5ex, "\K"] \arrow[dl, shift left = .5ex, "\Y"] \\
& \PGEsaP \arrow[ul, shift left = .5ex, "\A"] \arrow[ur, shift left = .5ex, "\V"] &
\end{tikzcd}
\]
\end{theorem}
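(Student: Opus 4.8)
The plan is to invoke \cref{rem: category theory} with $\mathcal{C} = \DMSLatP$, $\mathcal{D} = \PGPriesP$, and $F = \F$ (together with the contravariant $\X$), using \cref{thm: Pont} as the ambient equivalence/dual equivalence, and checking conditions \myref{rem1}--\myref{rem3} for the two restrictions $\BrwMSP \subseteq \DMSLatP$ (with $\BrwFrm \subseteq \AlgFrm$) and $\PGEsaP \subseteq \PGPriesP$ (with $\BrwFrm \subseteq \AlgFrm$). Once all three conditions hold for each pair, \cref{thm: Pont} restricts automatically to the claimed equivalence and dual equivalence, and the diagram commutes because it is the restriction of the commuting diagram of \cref{thm: Pont}.

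First I would handle condition \myref{rem1} on objects. For the equivalence side, I must show that for $A \in \DMSLatP$, $A \in \BrwMSP$ iff $\F(A) \in \BrwFrm$. Since $\F$ and $\K$ are mutually inverse equivalences and $A \cong K(\F(A))^d$ canonically (noted in the excerpt, just before \cref{def: sup-homs}), the defining condition ``$K(\F(A))^d$ is a Brouwerian semilattice'' is literally ``$A$ is a Brouwerian semilattice''; combined with $A \in \DMSLatP$ this is exactly $A \in \BrwMSP$. For the dual-equivalence side, I need: $L \in \AlgFrm$ is a Brouwerian frame iff $\Y(L) \in \PGEsaP$, i.e.\ iff $\Y(L)$ is a pointed generalized Esakia space (the morphism-side restriction of $\PGPriesP$ to $\PGEsaP$ adds no object condition). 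By \cref{lem: PGEsa facts(1)}, $\Y(L)$ is a pointed generalized Esakia space iff $\A(\Y(L))$ is a Brouwerian semilattice; and since $\A \circ \Y \cong \K$ (from \cref{thm: Pont}: $\X \circ \K \cong \Y$, hence $\A \circ \Y \cong \A \circ \X \circ \K \cong \K$), $\A(\Y(L)) \cong K(L)^d$, which is a Brouwerian semilattice precisely when $L$ is a Brouwerian frame.

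Next, condition \myref{rem2} on morphisms. On the algebraic side: a $\DMSLatP$-morphism $h : A \to B$ between Brouwerian semilattices is a $\BrwMSP$-morphism iff it preserves implication, and I must show this is equivalent to $\F(h)$ preserving the implication of compact elements (computed in the order-duals), which is exactly the condition defining a $\BrwFrm$-morphism. This is the ``$\F$-transfer of implication'' statement, analogous to \cref{lem: when is f an Esakia morphism}; under the identification $A \cong K(\F(A))^d$, $B \cong K(\F(B))^d$, the map $\F(h)$ restricted to compact elements is (order-dual to) $h$, so preservation of $\to$ on one side is tautologically preservation of $\to$ on the other --- one just has to verify that $\F(h)$ does restrict to $h$ on compacts, which follows from $\F(h)(\up a) = \up h(a)$ since principal filters are the compact elements of $\F(\cdot)$. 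On the space side: a $\PGPriesP$-morphism $f : X \to Y$ between pointed generalized Esakia spaces is a $\PGEsaP$-morphism iff (\cref{lem: PGEsa facts(2)}) $f^{-1}$ preserves implication of admissible clopen upsets, i.e.\ iff $f^{-1} : \A(Y) \to \A(X)$ is a $\BrwMSP$-morphism; transporting along $\A \cong \K$ (more precisely $\V \cong \F \circ \K$, since $\V$ and $\F$ agree up to the identification of $\A(X)$ with $K(\V(X))^d$), this says $f^{-1}$ on admissible \emph{closed} upsets is a $\BrwFrm$-morphism, which is condition \myref{rem2} for the dual equivalence. Condition \myref{rem3} is routine: isomorphisms in all the frame categories are poset isomorphisms and isomorphisms in all the space categories are order-homeomorphisms, and these restrict to the subcategories since the extra structure (implication, admissibility) is order-determined.

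The main obstacle I anticipate is the bookkeeping in condition \myref{rem2} on the space side: one must be careful that $\PGEsaP$-morphisms are honest functions (not the general relational $\PGEsa$-morphisms), that the relevant implication-preservation criterion in this functional setting is indeed \cref{lem: PGEsa facts(2)}, and that the identification of $f^{-1}$ acting on $\A(Y)$ with the $\K$-image of $f^{-1}$ acting on $\V(Y)$ is compatible with the order-dualization built into $\K$ --- in particular that ``preserves implication'' is symmetric under passing between $\A$ (ordered by reverse inclusion on closed upsets, or inclusion on clopen upsets) and $K(\cdot)^d$. None of these should be hard, but they are exactly the places where the order-reversal introduced by working with $K(L)^d$ rather than $K(L)$ can cause sign errors, so I would state the compatibility of $\K$ with implication as a short preliminary observation before running the three-condition check.
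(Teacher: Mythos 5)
Your proposal is correct and follows essentially the same route as the paper: both verify the three conditions of \cref{rem: category theory} against the ambient \cref{thm: Pont}, using the identification $A \cong K(\F(A))^d$ for the algebraic side and \cref{lem: PGEsa facts(1)} together with \cref{lem: PGEsa facts(2)} (via $K(\V(X))^d = \A(X)$) for the spatial side. The only cosmetic difference is that you check the object condition for the dual equivalence through $\Y$ and the natural isomorphism $\A\circ\Y\cong\K$, whereas the paper checks it through $\V$ and the concrete equality $K(\V(X))^d=\A(X)$; these are interchangeable.
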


\begin{proof}
We  show that $\F,\K$ restrict to give an equivalence between $\BrwMSP$ and $\BrwFrm$. If $A \in \DMSLatP$, then since $A \cong K(\F(A))^d$, we see that $\F(A) \in \BrwFrm$ iff $A \in \BrwMSP$. Next, let $h : A \to B$ be a $\DMSLatP$-morphism between Brouwerian semilattices. Then $h$ is a $\BrwMSP$-morphism iff it preserves implication, which happens iff $\F(h)$ is a $\BrwFrm$-morphism, again by the isomorphisms $A \cong K(\F(A))^d$ and $B \cong K(\F(B))^d$. Finally, $\DMSLatP$-isomorphisms between Brouwerian semilattices are clearly $\BrwMSP$-isomorphisms, and similarly $\AlgFrm$-isomorphisms between Brouwerian frames are $\BrwFrm$-isomorphisms. Thus, $\F$ and $\K$ restrict to an equivalence between $\BrwMSP$ and $\BrwFrm$ by \cref{rem: category theory}.

We next show that $\V,\Y$ restrict to give a dual equivalence between $\PGEsaP$ and $\BrwFrm$.  Let $X \in \PGPriesP$. By \cref{lem: PGEsa facts(1)}, $X \in \PGEsaP$ iff $\A(X)\in\BrwMSP$. By \cite[Lem.~4.4]{BCM22a}, $\V(X)$ is an algebraic frame with $K(\V(X))^d = \A(X)$. Hence, $\A(X)\in\BrwMSP$ iff $\V(X) \in \BrwFrm$. Thus, $X \in \PGEsaP$ iff $\V(X) \in \BrwFrm$. Let $f : X \to Y$ be a $\PGPriesP$-morphism between pointed generalized Esakia spaces. Since $K(\V(Y))^d = \A(Y)$, \cref{lem: PGEsa facts(2)} shows that $f$ is a $\PGEsaP$-morphism iff $f^{-1}(U \to V) = f^{-1}(U) \to f^{-1}(V)$ for each $U, V \in K(\V(Y))$. Therefore, $f$ is a $\PGEsaP$-morphism iff 
$f^{-1}$ is a $\BrwFrm$-morphism. It is also clear that $\PGPriesP$-isomorphisms between pointed generalized Esakia spaces are $\PGEsaP$-isomorphisms. Thus, $\V$ and $\Y$ restrict to an equivalence between $\PGEsaP$ and $\BrwFrm$.

In view of \cref{thm: Pont}, it follows that the diagram commutes up to natural isomorphism.
\end{proof}

\begin{remark} \label{rem: BrwFrmJ}
If in \cref{thm: BrwMS = BrwFrm} we replace $\BrwMSP$ with $\BrwMS$ and $\PGEsaP$ with $\PGEsa$, then we need to weaken the notion of a $\BrwFrm$-morphism $\alpha : L \to M$ by dropping the condition that $\alpha$ preserves finite meets. If we denote the resulting category of Brouwerian frames by $\BrwFrmJ$ (where J stands for join-preserving), then we obtain the following version of the diagram in \cref{thm: BrwMS = BrwFrm} which commutes up to natural isomorphism:
\[
\begin{tikzcd}
\BrwMS \arrow[rr, shift left = .5ex, "\F"] \arrow[dr,  shift left=.5ex, "\X"] && \BrwFrmJ \arrow[ll, shift left = .5ex, "\K"] \arrow[dl, shift left = .5ex, "\Y"] \\
& \PGEsa \arrow[ul, shift left = .5ex, "\A"] \arrow[ur, shift left = .5ex, "\V"] &
\end{tikzcd}
\]
The proof is an appropriate modification of the proof of \cref{thm: BrwMS = BrwFrm} along the lines of \cite[Thm.~4.30]{BCM22a}.
\end{remark}

\begin{remark}
The other categories of Heyting frames considered in \cref{sec: Heyting frames} also have natural generalizations to the setting of Brouwerian frames. The equivalences and dual equivalences of \cref{sec: Heyting frames} then generalize to involve the corresponding categories of Brouwerian frames. 
\end{remark}

We next study those Brouwerian frames that correspond to Brouwerian algebras. Recall (see, e.g., \cite[p.~117]{GHKLMS03}) that an algebraic frame $L$ is {\em arithmetic} if $a,b\in K(L)$ implies $a\wedge b\in K(L)$. 

\begin{definition}
Let $\BrwArFrm$ be the full subcategory of $\BrwFrm$ whose objects are 
Brouwerian arithmetic frames.
\end{definition}

\begin{definition}
Let $\PEsaP$ be the wide subcategory of $\PEsa$ whose morphisms $f : (X, m) \to (Y, n)$ satisfy $f^{-1}(\{n\}) = \{m\}$.
\end{definition}

\begin{remark}
For a pointed Esakia space $(X,m)$, if we let $X_0=X\setminus\{m\}$, then $(X,X_0,m)$ is a pointed generalized Esakia space. Thus, we can view $\PEsaP$ as a full subcategory of $\PGEsaP$.
\end{remark}

\begin{theorem}\label{thm: BrwArFrm}
The functors of Theorem~\emph{\ref{thm: BrwMS = BrwFrm}} restrict to yield that $\BrwAP$ is equivalent to $\BrwArFrm$ and dually equivalent to $\PEsaP$. Consequently, the following diagram commutes up to natural isomorphism.
\[
\begin{tikzcd}
\BrwAP \arrow[rr, shift left = .5ex, "\F"] \arrow[dr,  shift left=.5ex, "\X"] && \BrwArFrm \arrow[ll, shift left = .5ex, "\K"] \arrow[dl, shift left = .5ex, "\Y"] \\
& \PEsaP \arrow[ul, shift left = .5ex, "\A"] \arrow[ur, shift left = .5ex, "\V"] &
\end{tikzcd}
\]
\end{theorem}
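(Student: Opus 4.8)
The proof will follow the template of \cref{rem: category theory}, verifying Conditions~(\ref{rem1})--(\ref{rem3}) for each of the two functor pairs $(\F,\K)$ and $(\V,\Y)$ as restrictions of the equivalences in \cref{thm: BrwMS = BrwFrm}. So the work splits into three tasks: (a) show that on objects, $A\in\BrwMSP$ lies in $\BrwAP$ iff $\F(A)$ is arithmetic, and $X\in\PGEsaP$ lies in $\PEsaP$ iff $\V(X)$ is arithmetic; (b) show that on morphisms the extra conditions match up, i.e.\ a $\BrwMSP$-morphism between Brouwerian algebras is a $\BrwAP$-morphism (preserves $\vee$) iff the corresponding $\BrwFrm$-morphism is an $\BrwArFrm$-morphism (there is no extra morphism condition, since $\BrwArFrm$ is a \emph{full} subcategory of $\BrwFrm$), and dually a $\PGEsaP$-morphism between pointed Esakia spaces is a $\PEsaP$-morphism iff the dual frame morphism is a $\BrwArFrm$-morphism; and (c) note that isomorphisms in all these categories are order isomorphisms (for frames) or order-homeomorphisms (for spaces), which is immediate as before.

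\textbf{The object-level correspondence (task (a)).} The key computation is to identify when $K(L)^d$ is a \emph{lattice} as opposed to merely a meet-semilattice. Since $K(L)^d$ has all finite meets (these are joins in $K(L)$, which always exist since $K(L)$ is a join-subsemilattice of $L$), being a lattice amounts to having all finite joins, i.e.\ to $K(L)$ being closed under finite meets inside $L$ --- which is exactly the definition of $L$ being arithmetic. Thus for a Brouwerian frame $L$, $K(L)^d$ is a Brouwerian algebra iff $L$ is arithmetic, giving $\F(A)\in\BrwArFrm$ iff $A\in\BrwAP$ via $A\cong K(\F(A))^d$. For the spatial side, recall from the proof of \cref{thm: BrwMS = BrwFrm} that $K(\V(X))^d=\A(X)$; so $\V(X)$ is arithmetic iff $\A(X)$ is a lattice, i.e.\ iff $\A(X)$ is a Brouwerian algebra. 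It remains to see that for a pointed generalized Esakia space $X\in\PGEsaP$, the algebra $\A(X)$ of admissible clopen upsets is a lattice precisely when $X$ arises from a pointed Esakia space with $X_0=X\setminus\{m\}$ in the sense of $\PEsaP$. One direction is easy: if $X$ is an Esakia space with $X_0=X\setminus\{m\}$, every clopen upset is admissible, so $\A(X)=\ClopUp(X)\setminus\{\varnothing\}$ is closed under $\cup$, hence a lattice. The converse uses the duality: $\A(X)$ being a Brouwerian algebra forces, under the dual equivalence $\BrwMS\cong\PGEsa$ (or rather its restriction in \cref{thm: BrwMS = BrwFrm}), that $X$ is the dual of a Brouwerian algebra, and the dual spaces of Brouwerian algebras are exactly the pointed Esakia spaces (\cref{thm: duality for BrwA}); one then checks $X_0$ must be all of $X\setminus\{m\}$ since optimal and prime filters coincide for a lattice.

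\textbf{The morphism-level correspondence (task (b)).} Here one must check that under the equivalences, a $\BrwMSP$-morphism $h:A\to B$ between Brouwerian algebras preserves $\vee$ iff $\F(h):\F(A)\to\F(B)$ is an $\AlgFrm$-morphism between arithmetic Brouwerian frames --- but since $\BrwArFrm$ is full in $\BrwFrm$ and $\F(h)$ is automatically a $\BrwFrm$-morphism (as $h$ is a $\BrwMSP$-morphism, hence preserves implication), there is genuinely nothing extra to verify on the frame side: $\F(h)$ is an $\BrwArFrm$-morphism as soon as its domain and codomain are arithmetic. The only subtlety is that the source category $\BrwAP$ is \emph{non-full} in $\BrwMSP$ (morphisms must preserve $\vee$), so I must confirm that $\F$ and $\K$ still give an equivalence $\BrwAP\cong\BrwArFrm$ despite this asymmetry; the resolution is that a $\DMSLatP$-morphism between Brouwerian \emph{algebras} automatically preserves existing finite joins by \cref{def: sup-homs}(1), so in fact $\BrwAP$ \emph{is} full in $\BrwMSP$ when restricted to Brouwerian algebras, and likewise $\PEsaP$ is full in $\PGEsaP$ by the remark preceding the theorem. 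On the spatial side, a $\PGEsaP$-morphism $f:X\to Y$ between pointed Esakia spaces is a $\PEsaP$-morphism iff $f^{-1}(\{n\})=\{m\}$, and I expect this to be equivalent, via the lifting property defining $\PGEsaP$-morphisms together with $X_0=X\setminus\{m\}$, to $f^{-1}$ being a frame morphism on the nose between the arithmetic frames $\V(X),\V(Y)$ --- but again, fullness means this is automatic. Assembling (a), (b), (c) and invoking \cref{thm: BrwMS = BrwFrm} gives the equivalences, the dual equivalence, and commutativity of the triangle up to natural isomorphism.

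\textbf{Main obstacle.} The delicate point is task~(a) on the spatial side: proving that $\A(X)$ being closed under finite unions forces $X$ to actually be a pointed Esakia space with $X_0=X\setminus\{m\}$ --- in particular that \emph{every} clopen upset becomes admissible. The cleanest route, which I would take, is to avoid a direct topological argument and instead route through the dualities: $\A(X)$ a Brouwerian algebra $\Rightarrow X\in\PGEsa$ is the dual of a Brouwerian algebra $\Rightarrow$ by uniqueness of dual spaces (\cref{thm: duality for BrwA} and \cref{thm: duality for BrwMS(2)} together), $X$ is order-homeomorphic to a pointed Esakia space, and then $X_0=\Opt(\A(X))$-side $=\mathsf{Pr}(\A(X))$-side collapses because optimal $=$ prime for lattices. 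One should also double-check that the arithmeticity condition is the \emph{right} frame-side condition by re-examining \cite[Lem.~4.4]{BCM22a}, which identifies $K(\V(X))^d=\A(X)$, to be sure the meet in $K(\V(X))$ is computed as claimed.
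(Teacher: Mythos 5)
Your proposal is correct and follows the same overall structure as the paper's proof: establish the object-level correspondences on both sides, then let fullness of $\BrwAP$, $\BrwArFrm$, and $\PEsaP$ in $\BrwMSP$, $\BrwFrm$, and $\PGEsaP$ handle the morphism level via \cref{thm: BrwMS = BrwFrm}. Your identification ``$K(L)^d$ is a lattice iff $L$ is arithmetic'' and your argument that $\V(X)$ is arithmetic for $X \in \PEsaP$ (every nonempty clopen upset is admissible, so $\A(X)$ is closed under finite unions) are exactly the paper's. The one step where you take a genuinely different route is the converse spatial direction, namely that an arithmetic Brouwerian frame has dual space in $\PEsaP$: the paper proves directly that $PP(L) \subseteq P(L)$ by taking $p$ pseudoprime and $a, b \not\le p$, choosing compact $k \le a$ and $l \le b$ with $k, l \not\le p$, and using $k \wedge l \in K(L)$ together with $k \wedge l \ll p$ to contradict $a \wedge b \le p$; you instead pass to the algebra side, invoking that optimal filters coincide with prime filters in a lattice and the natural isomorphism $\Y \cong \X \circ \K$ from \cref{thm: Pont}. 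The two arguments are two faces of the same fact (pseudoprime elements of $L$ correspond to optimal filters of $K(L)^d$ and prime elements to prime filters), so your route is valid; the paper's version has the advantage of being a self-contained computation entirely on the frame side, while yours outsources the work to the filter-theoretic description of the duality, which is what you flag as the ``delicate point'' and is indeed the only place requiring care.
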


\begin{proof}
Let $L \in \BrwArFrm$. We prove that $\Y(L) \in \PEsaP$. For this it suffices to show that $PP(L) \subseteq P(L)$. Let $p \in PP(L)$ and $a,b \in L$ with $a,b \not\le p$. Since $L$ is an algebraic frame, there are $k, l \in K(L)$ with $k \le a$, $l \le b$, and $k, l \not\le p$. Because $L$ is arithmetic, $k \wedge l \in K(L)$. If $a \wedge b \le p$, then $k \wedge l \ll p$. Since $p \in PP(L)$, either $k \le p$ or $l \le p$. The obtained contradiction shows that $a \wedge b \not \le p$, and hence $p \in P(L)$.

Next let $X \in \PEsaP$. Then each nonempty closed upset is admissible, and so if $U, V \in \A(X)$, then $U \cup V \in \A(X)$ since $\A(X)  = K(\V(X))^d$. This shows that $\V(X)$ is arithmetic, and hence $\V(X) \in \BrwArFrm$. 

It is also elementary to see that if $L \in \BrwFrm$, then $L \in \BrwArFrm$ iff $\K(L) \in \BrwAP$. 
Thus, since $\BrwAP$, $\BrwArFrm$, and $\PEsaP$ are full subcategories of $\BrwMSP$, $\BrwFrm$, and $\PGEsaP$, the result follows from \cref{thm: BrwMS = BrwFrm}.
\end{proof}

\begin{remark} 
As in \cref{rem: BrwFrmJ}, if we replace $\BrwAP$ with $\BrwA$ and $\PEsaP$ with $\PEsa$, then we have to replace $\BrwArFrm$ with $\BrwArFrmJ$ to obtain the following diagram of equivalences and dual equivalences that commutes up to natural isomorphism:
\[
\begin{tikzcd}
\BrwA \arrow[rr, shift left = .5ex, "\F"] \arrow[dr,  shift left=.5ex, "\X"] && \BrwArFrmJ \arrow[ll, shift left = .5ex, "\K"] \arrow[dl, shift left = .5ex, "\Y"] \\
& \PEsa \arrow[ul, shift left = .5ex, "\A"] \arrow[ur, shift left = .5ex, "\V"] &
\end{tikzcd}
\]
\end{remark}

Combining \cref{thm: BrwMS = BrwFrm,thm: BrwArFrm,thm: HA case}, we arrive at the following diagram which commutes up to natural isomorphism:
\[
\begin{tikzcd}
\BrwMSP \arrow[r, leftrightarrow] & \BrwFrm \arrow[r, leftrightarrow, "d"] &\PGEsaP \\
\BrwAP \arrow[u] \arrow[r, leftrightarrow] & \BrwArFrm \arrow[u]  \arrow[r, leftrightarrow, "d"] &\PEsaP \arrow[u] \\
\HA \arrow[u] \arrow[r, leftrightarrow] & \HeytFrm \arrow[u] \arrow[r, leftrightarrow, "d"] & \Esa \arrow[u]
\end{tikzcd}
\]
The horizontal arrows represent equivalences or dual equivalences when the label $d$ is present. The vertical arrows represent full subcategories, except $\HeytFrm$ is not really a subcategory of $\BrwArFrm$ (see below). 
The equivalences and dual equivalences of the middle row are restrictions of the equivalences and dual equivalences of the top row. The situation with the bottom row is slightly different in that if $L\in\HeytFrm$, then we work with $K(L)$ rather than $K(L)^d$. Similarly, if $X\in\Esa$, we work with the open upsets of $X$ rather than the closed upsets (ordered by reverse inclusion). 

If instead of $\HeytFrm$ we worked with the category whose objects are Brouwerian arithmetic frames in which $K(L)^d$ is a Heyting algebra and whose morphisms are $\BrwArFrm$-morphisms, 
then we would obtain a category that is a full subcategory of $\BrwArFrm$ and is equivalent to $\HeytFrm$. 
In other words, if instead of working with the frames of open upsets of Esakia spaces we worked with the frames of closed upsets  (ordered by reverse inclusion), then we would obtain a category that is equivalent to a full subcategory of $\BrwArFrm$ that is equivalent to $\HeytFrm$.


\begin{thebibliography}{10}

\bibitem{BBGK10}
G.~Bezhanishvili, N.~Bezhanishvili, D.~Gabelaia, and A.~Kurz,
  \emph{Bitopological duality for distributive lattices and {H}eyting
  algebras}, Math. Structures Comput. Sci. \textbf{20} (2010), no.~3, 359--393.

\bibitem{BCM22a}
G.~Bezhanishvili, L.~Carai, and P.~J. Morandi, \emph{Deriving {P}riestley
  duality and its generalizations from {P}ontryagin duality for semilattices},
  submitted. Available at https://arxiv.org/abs/2207.13938, 2022.

\bibitem{BJ11}
G.~Bezhanishvili and R.~Jansana, \emph{Priestley style duality for distributive
  meet-semilattices}, Studia Logica \textbf{98} (2011), no.~1-2, 83--122.

\bibitem{BJ13}
\bysame, \emph{Esakia style duality for implicative semilattices}, Appl. Categ.
  Structures \textbf{21} (2013), no.~2, 181--208.

\bibitem{BMR17}
G.~Bezhanishvili, T.~Moraschini, and J.~G. Raftery, \emph{Epimorphisms in
  varieties of residuated structures}, J. Algebra \textbf{492} (2017),
  185--211.

\bibitem{CZ97}
A.~Chagrov and M.~Zakharyaschev, \emph{Modal logic}, Oxford Logic Guides,
  vol.~35, The Clarendon Press, Oxford University Press, New York, 1997.

\bibitem{Cor75}
W.~H. Cornish, \emph{On {H}. {P}riestley's dual of the category of bounded
  distributive lattices}, Mat. Vesnik \textbf{12(27)} (1975), no.~4, 329--332.

\bibitem{DG03}
B.~A. Davey and J.~C. Galati, \emph{A coalgebraic view of {H}eyting duality},
  Studia Logica \textbf{75} (2003), no.~3, 259--270.

\bibitem{Esa74}
L.~L. Esakia, \emph{Topological {K}ripke models}, Soviet Math. Dokl.
  \textbf{15} (1974), 147--151.

\bibitem{Esa19}
\bysame, \emph{Heyting algebras. {D}uality theory}, Translated from the Russian
  by A. Evseev. Edited by G. Bezhanishvili and W. Holliday. Trends in Logic,
  vol.~50, Springer, 2019.

\bibitem{GHKLMS03}
G.~Gierz, K.~H. Hofmann, K.~Keimel, J.~D. Lawson, M.~Mislove, and D.~S. Scott,
  \emph{Continuous lattices and domains}, Cambridge University Press,
  Cambridge, 2003.

\bibitem{Har77}
R.~Hartshorne, \emph{Algebraic geometry}, Graduate Texts in Mathematics, No.
  52, Springer-Verlag, New York-Heidelberg, 1977.

\bibitem{Joh82}
P.~T. Johnstone, \emph{Stone spaces}, Cambridge Studies in Advanced
  Mathematics, vol.~3, Cambridge University Press, Cambridge, 1982.

\bibitem{PP12}
J.~Picado and A.~Pultr, \emph{Frames and locales: Topology without points},
  Frontiers in Mathematics, Birkh\"auser/Springer Basel AG, Basel, 2012.

\bibitem{Pri70}
H.~A. Priestley, \emph{Representation of distributive lattices by means of
  ordered {S}tone spaces}, Bull. London Math. Soc. \textbf{2} (1970), 186--190.

\bibitem{Pri72}
\bysame, \emph{Ordered topological spaces and the representation of
  distributive lattices}, Proc. London Math. Soc. (3) \textbf{24} (1972),
  507--530.

\bibitem{Pri84}
\bysame, \emph{Ordered sets and duality for distributive lattices}, Orders:
  description and roles ({L}'{A}rbresle, 1982), North-Holland Math. Stud.,
  vol.~99, North-Holland, Amsterdam, 1984, pp.~39--60.

\bibitem{RS63}
H.~Rasiowa and R.~Sikorski, \emph{The mathematics of metamathematics},
  Monografie Matematyczne, Tom 41, Pa\'{n}stwowe Wydawnictwo Naukowe, Warsaw,
  1963.

\bibitem{Sto37c}
M.~H. Stone, \emph{Topological representations of distributive lattices and
  {Brouwerian} logics.}, {\v{C}}as. Mat. Fys. \textbf{67} (1937), 1--25.

\end{thebibliography}
\end{document}